\newtheorem{thm}{Theorem}[section]
\newtheorem{lem}[thm]{Lemma}
\newtheorem{lemma}[thm]{Lemma}
\newtheorem{cor}[thm]{Corollary}
\newtheorem{ex}[thm]{Example}
\newtheorem{dff}[thm]{Definition}
\newtheorem{remark}[thm]{Remark}
\newtheorem{question}[thm]{Question}
\newtheorem{conjecture}[thm]{Conjecture}
\numberwithin{thm}{subsection}
\newcommand{\GL}{{\operatorname{GL}}}
\newcommand{\Ccut}{{\Sigma}_{\mathrm{cut}}}
\newcommand{\Aut}{\operatorname{Aut}}
\newcommand{\Hom}{\operatorname{Hom}}
\newcommand{\End}{\operatorname{End}}
\newcommand{\Tr}{\operatorname{Tr}}
\title[Representations with finite mapping class group orbit]{Representations of surface groups with universally finite mapping class group orbit}
\author{Brian Lawrence and Daniel Litt}
\begin{document}

\maketitle

\begin{abstract}
   Let $\Sigma_{g,n}$ be the orientable genus $g$ surface with $n$ punctures, where $2-2g-n<0$. Let $$\rho: \pi_1(\Sigma_{g,n})\to GL_m(\mathbb{C})$$ be a representation. Suppose that for each finite covering map $f: \Sigma_{g', n'}\to \Sigma_{g, n}$, the orbit of (the isomorphism class of) $f^*(\rho)$ under the mapping class group $MCG(\Sigma_{g',n'})$ of $\Sigma_{g',n'}$ is finite. Then we show that $\rho$ has finite image. The result is motivated by the Grothendieck-Katz $p$-curvature conjecture, and gives a reformulation of the $p$-curvature conjecture in terms of isomonodromy.
\end{abstract}

\section{Introduction}
\subsection{The main result}
%
%
%
%
%
%
The goal of this paper is to prove a result on mapping class group actions on character varieties, motivated by questions from algebraic and arithmetic geometry. 

Our main result may be stated purely topologically. Let $\Sigma$ be an orientable surface (possibly with finitely many punctures and boundary components) with $\chi(\Sigma)<0$. Note that the mapping class group $\text{MCG}(\Sigma)$ of $\Sigma$ has a natural outer action on $\pi_1(\Sigma)$, and hence acts on the set of isomorphism classes of complex representations of $\pi_1(\Sigma)$.
\begin{thm}\label{main-thm}
Let $$\rho: \pi_1(\Sigma)\to GL_m(\mathbb{C})$$ be a representation. Suppose that for each finite covering map $$f: \Sigma'\to \Sigma,$$ the orbit of (the isomorphism class of) $f^*(\rho)$ under the mapping class group $MCG(\Sigma')$ is finite. Then $\rho$ has finite image.
\end{thm}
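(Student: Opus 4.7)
My plan is to combine three ingredients: a local computation of how a single Dehn twist acts on the character variety, Scott's LERF theorem for surface groups (every immersed loop lifts to an embedded one in some finite cover), and Schur's theorem that a finitely generated torsion subgroup of $GL_m(\mathbb{C})$ is finite.

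For the local step, fix a non-separating simple closed curve $\gamma \subset \Sigma$ and use the resulting HNN presentation of $\pi_1(\Sigma)$: there is a stable letter $t$ crossing $\gamma$ once, and the Dehn twist $T_\gamma$ fixes $\pi_1(\Sigma \setminus \gamma)$ pointwise while sending $t \mapsto t\gamma$. Since the MCG-orbit of $[\rho]$ is finite, some power $T_\gamma^N$ stabilizes it; the implementing intertwiner $g \in GL_m(\mathbb{C})$ must centralize $\rho(\pi_1(\Sigma \setminus \gamma))$ and satisfy $g\rho(t)g^{-1} = \rho(t)\rho(\gamma)^N$. Assuming $\rho|_{\pi_1(\Sigma \setminus \gamma)}$ is absolutely irreducible, Schur's lemma forces $g$ to be a scalar, whence $\rho(\gamma)^N = 1$, so $\rho(\gamma)$ has finite order.

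To globalize, I would invoke Scott's theorem: any element $\delta \in \pi_1(\Sigma)$ has some positive power $\delta^k$ freely homotopic to a simple closed curve $\tilde\delta$ on some finite cover $f: \Sigma' \to \Sigma$. By hypothesis $f^*\rho$ again has finite $\mathrm{MCG}(\Sigma')$-orbit, so the local step on $\Sigma'$ applied to $\tilde\delta$ yields $(f^*\rho)(\tilde\delta)^{N} = \rho(\delta)^{kN} = 1$. Thus every element of the image of $\rho$ has finite order, and since $\pi_1(\Sigma)$ is finitely generated, Schur's theorem concludes that $\rho(\pi_1(\Sigma))$ is finite.

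The main obstacle is the irreducibility condition in the local step: typically $\rho|_{\pi_1(\Sigma \setminus \gamma)}$ is not absolutely irreducible even when $\rho$ is, so $g$ can be non-scalar and one gets no control on $\rho(\gamma)$. I would address this in two stages. First, reduce to semisimple $\rho$ via semisimplification, whose isomorphism class is MCG-equivariantly determined by $[\rho]$; the unipotent discrepancy between $\rho$ and its semisimplification can be handled using the MCG-action on successive nilpotent quotients of $\pi_1(\Sigma)$, which admits no nontrivial finite orbits. Then for semisimple $\rho$, pass to deeper finite covers chosen so that the complement of the specific simple closed curve carrying the desired $\delta^k$ has restriction of $f^*\rho$ with trivial commutant. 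Ensuring that such covers exist uniformly, for every element $\delta$ of every intermediate cover, is the delicate core of the argument, and is precisely where the hypothesis of finite orbit on \emph{every} finite cover, rather than only on $\Sigma$, is fully used.
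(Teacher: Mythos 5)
Your outline correctly identifies the local Dehn-twist computation, Scott's theorem, and the intertwiner argument, and these do form the skeleton of the paper's proof. But both parts of your proposed fix for ``the main obstacle'' have real gaps. Your claim that the MCG-action on successive nilpotent quotients of $\pi_1(\Sigma)$ ``admits no nontrivial finite orbits'' is false: the paper's final example exhibits, for every $\Sigma$ with $\chi(\Sigma)<0$ and every $n>1$, a nontrivial unipotent representation on $\mathbb{Q}[\pi_1(\Sigma)]/\mathscr{I}^n$ that is literally \emph{fixed} by $MCG(\Sigma)$. MCG-finiteness on $\Sigma$ alone cannot kill the extension class; the paper's Lemma \ref{exact_seq} instead passes to a cover $\Sigma'$ of strictly larger genus, finds a class $\gamma_2$ in the kernel of $H_1(\Sigma')\to H_1(\Sigma)$, and plays a high power of a Dehn twist along a simple loop $\gamma_1$ meeting $\gamma_2$ against the extension cocycle $\sigma_{\Sigma'}$. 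That cover-and-twist argument, not a nilpotent-quotient argument, is what makes this step work, and it is the place where the \emph{universal} hypothesis is essential even for the unipotent part.

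The second half of your fix --- choosing deeper covers so that $f^*\rho$ restricted to the complement of the simple closed curve has trivial commutant --- is unverified and in general unattainable: if $\rho$ itself is reducible, no finite cover will make its restriction to any subsurface have trivial commutant, and you get no control on $\rho(\gamma)$. The paper's Theorem \ref{strong-theorem} does the opposite: when the intertwiner $g$ is non-scalar, the restriction of $\rho$ to the cut surface commutes with $g$ and hence stabilizes a proper eigenspace $W$ of $g$; one checks (Lemmas \ref{umf_cut} and \ref{subquot}) that $W$ and $V/W$ remain universally MCG-finite relative to the ambient surface, so an \emph{induction on $\dim V$}, carried out in the relative setting of cut surfaces, gives finite image of $W^{ss}$ and $(V/W)^{ss}$ and hence quasi-unipotence of $\rho(\gamma)$. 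Your outline is missing both the induction on rank and the ``relative to $\Sigma$'' refinement of universal MCG-finiteness that makes the induction close. Note also that in this non-scalar case the argument yields only quasi-unipotence, not finite order, so Schur's theorem on torsion linear groups is not directly applicable; the paper instead finishes with a Tits/Burnside trace argument (Lemma \ref{gln_finite_order}) to conclude that $\rho^{ss}$ has finite image.
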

We will give a proof of Theorem \ref{main-thm} in Section \ref{thm-pf}.
\begin{remark}
Note that if $\rho: \pi_1(\Sigma)\to GL_m(\mathbb{C})$ has finite image, then its orbit under the mapping class group is finite. In general, the converse is not true; see e.g. \cite[Proposition 1.2]{BKMS} or Example \ref{parshin-example} of this paper.  See also \cite[Theorem 1.1]{BKMS} for a result related to our Theorem \ref{main-thm}, where the mapping class group is replaced by $\text{Aut}(\pi_1(\Sigma))$.

See also \cite{BGMW} for stronger results in the case of representations into $SL_2(\mathbb{C})$.
\end{remark}
As a corollary of our main theorem, we have the following purely group-theoretic statement:
\begin{cor}\label{main-cor}
Let $$\rho:\pi_1(\Sigma)\to GL_m(\mathbb{C})$$ be a representation. Suppose that for each finite index subgroup $G\subset \pi_1(\Sigma)$, the orbit of (the isomorphism class) of $\rho|_G$ under $\text{Out}(G)$ is finite. Then $\rho$ has finite image.
\end{cor}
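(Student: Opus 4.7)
The plan is to deduce Corollary \ref{main-cor} directly from Theorem \ref{main-thm} by showing the hypothesis of the corollary implies the hypothesis of the theorem. The key observation is that for any surface $\Sigma'$, the mapping class group acts on $\pi_1(\Sigma')$ through outer automorphisms, so on isomorphism classes of representations the $\text{MCG}(\Sigma')$-orbit is contained in the $\text{Out}(\pi_1(\Sigma'))$-orbit.

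Concretely, given a connected finite cover $f: \Sigma' \to \Sigma$, I would choose a basepoint lift so that $f_*: \pi_1(\Sigma') \hookrightarrow \pi_1(\Sigma)$ identifies $\pi_1(\Sigma')$ with a finite-index subgroup $G \subset \pi_1(\Sigma)$; under this identification $f^*\rho = \rho|_G$. Conversely, every finite-index subgroup of $\pi_1(\Sigma)$ arises this way from a connected finite cover, and every such cover is again an orientable surface of negative Euler characteristic, hence falls within the scope of Theorem \ref{main-thm}. The natural homomorphism $\text{MCG}(\Sigma') \to \text{Out}(\pi_1(\Sigma'))$ (Dehn--Nielsen--Baer, or simply the observation that a self-homeomorphism induces an automorphism of $\pi_1$ well-defined up to inner automorphism) is exactly the map through which the $\text{MCG}(\Sigma')$-action on isomorphism classes of representations is defined. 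Hence the $\text{MCG}(\Sigma')$-orbit of $[f^*\rho]$ is a subset of the $\text{Out}(G)$-orbit of $[\rho|_G]$.

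Applying the hypothesis of the corollary, the latter orbit is finite for every finite-index $G$, so the former is finite for every finite cover $\Sigma' \to \Sigma$. Theorem \ref{main-thm} then yields that $\rho$ has finite image. There is no substantive obstacle: all the mathematical content lies in Theorem \ref{main-thm}. The only bookkeeping is to note that changing the basepoint lift conjugates $G$ inside $\pi_1(\Sigma)$ and accordingly replaces $\rho|_G$ by an isomorphic representation, so working with isomorphism classes makes the choice of lift immaterial.
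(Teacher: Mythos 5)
Your argument is correct and is precisely the deduction the paper has in mind: the paper states Corollary \ref{main-cor} without an explicit proof because it follows immediately from Theorem \ref{main-thm} once one notes that the $MCG(\Sigma')$-action on isomorphism classes of representations factors through $\text{Out}(\pi_1(\Sigma'))$, so the $MCG(\Sigma')$-orbit of $[f^*\rho]$ is contained in the $\text{Out}(G)$-orbit of $[\rho|_G]$. Your bookkeeping about the bijection between connected finite covers and finite-index subgroups, and about basepoint lifts only changing $G$ by conjugation, is careful and correct.
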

\begin{remark}
Note that the analogue of Corollary \ref{main-cor} is not true for general groups. For example, let $n>2$ and let $$\rho_{\text{std}}: SL_n(\mathbb{Z}) \to GL_n(\mathbb{C})$$ be the standard representation. For any $G\subset SL_n(\mathbb{Z})$ of finite index, the orbit of $\rho_{\text{std}}|_G$ under $\text{Out}(G)$ is a singleton, by e.g.~Margulis super-rigidity --- but of course $\rho_{\text{std}}$ has infinite image.
\end{remark}

\begin{remark}
To clarify ideas, we'll explain the special case where the representation $\rho$ has rank $m=1$, i.e.\ is given by a map
\[ \rho \colon \pi_1(\Sigma)\to \mathbb{C}^*. \]
Such a $\rho$ must factor through the abelianization $H_1(\Sigma)$ of $\pi_1(\Sigma)$.
Choosing a basis for $H_1(\Sigma)$, we see that the set of such $\rho$ is in bijection with
\[ \operatorname{Hom}(H_1(\Sigma), (\mathbb{C}^*)) \cong (\mathbb{C}^*)^{2g} \cong (\mathbb{C}/\mathbb{Z})^{2g} \]
(the second isomorphism being given by a suitably normalized logarithm).

The mapping class group acts through its quotient $\operatorname{Sp}_{2g}(\mathbb{Z})$ on $(\mathbb{C}/\mathbb{Z})^{2g}$ in the obvious way.
In order that $\rho$ be MCG-finite, 
the corresponding point of $(\mathbb{C}/\mathbb{Z})^{2g}$
must have finite orbit under the action of $\operatorname{Sp}_{2g}(\mathbb{Z})$.
One verifies that the only such points are torsion points, i.e.\ elements of
\[ (\mathbb{Q}/\mathbb{Z})^{2g}. \]
Hence, if $\rho$ is MCG-finite, then it has finite image.
\end{remark}

Theorem \ref{main-thm} and Corollary \ref{main-cor} are in fact equivalent in the case that $\Sigma$ is a closed surface, as in this case $MCG(\Sigma)$ has finite index in $\text{Out}(\pi_1(\Sigma))$ by the Dehn-Nielsen-Baer Theorem. The case of surfaces with punctures (and possibly boundary components) also admits a purely group-theoretic reformulation, but we omit it here.

For the rest of the introduction, we explain the motivation for this theorem, arising from the $p$-curvature conjecture, and its implications for isomonodromic deformations of flat vector bundles on algebraic curves.
\subsection{The algebro-geometric setting} Let $C$ be smooth proper algebraic curve over the field of complex numbers, and let $D\subset C$ be a finite set.  The Riemann-Hilbert correspondence is an equivalence of categories between the category of algebraic flat vector bundles with regular singularities at infinity on $C\setminus D$, (that is, flat vector bundles on $C\setminus D$ which extend to objects of the category $$\text{MIC}(C(\log D))$$ of 
vector bundles with flat holomorphic connection $$(\mathscr{E}, \nabla: \mathscr{E}\to \mathscr{E}\otimes \Omega^1_C(\log D))$$ on $C$) and the category $\text{LocSys}(C\setminus D)$ of
complex local systems on $C\setminus D$.
If we choose a base-point $x \in C$, then monodromy gives an equivalence of both categories above with the category $\text{Rep}_{\mathbb{C}}(\pi_1(C\setminus D, x))$ of
representations $$\rho: \pi_1(C\setminus D, x) \rightarrow \operatorname{GL}_n(\mathbb{C})$$
of the topological fundamental group of $C$. Let $\rho_{\mathscr{E}, \nabla}$ be the representation associated to a flat vector bundle $(\mathscr{E}, \nabla)$.

Consider the relative situation, where we have a family $\pi: \mathcal{C} \rightarrow S$
of smooth proper curves over a smooth base $S$, which we take to be a scheme over $\mathbb{C}$.
Locally for the complex topology, we can choose a section $x: S \rightarrow \mathcal{C}$,
and (the isomorphism class of) the fundamental group of the fiber is locally constant on $S$.
If we are given a base-point $s_0 \in S$, and a vector bundle with connection
$$(\mathscr{E}_0, \nabla_0: \mathscr{E}_0\to \mathscr{E}_0\otimes \Omega^1_C)$$ on the fiber $C_0 = \mathcal{C}_{s_0}$,
there is a unique (up to canonical isomorphism) analytic deformation $$(\mathscr{E}, \nabla: \mathscr{E}\to \mathscr{E}\otimes \Omega^1_{\mathcal{C}/S})$$ of $(E_0^{\text{an}}, \nabla_0^{\text{an}})$
to a relative flat vector bundle on $\pi^{-1}(U)$, where $U\subset S$ is any contractible analytic open set containing $s_0$,
such that (the isomorphism class of) the corresponding representation $\rho$ of the fundamental group
is constant. Explicitly, as $U$ is contractible, $\pi^{-1}(U)$ is naturally homotopy equivalent to $C_0$, so the composition $$\pi_1(\pi^{-1}(U))\overset{\sim}{\to} \pi_1(C_0)\overset{\rho_{\mathscr{E_0}, \nabla_0}}{\longrightarrow} GL(\mathscr{E}_{x(s_0)})$$ yields a local system on $\pi^{-1}(U)$, hence an (analytic) flat vector bundle. We call this the \emph{isomonodromic deformation} of $(\mathscr{E}_0, \nabla_0)$.
Such isomonodromic deformations are sometimes referred to as flat sections to the non-abelian Gauss-Manin connection.

Typically, if $S$ is not simply connected, the isomonodromic deformation does not extend to all of $\mathcal{C}/S$, even after \'etale base change. If it does, we say that $(\mathscr{E}, \nabla)$ admits an algebraic isomonodromic deformation.

\begin{dff}
Let $(\mathscr{E}, \nabla)$ be a flat vector bundle on a smooth proper curve $C$ of genus $g>1$. Let $\mathscr{C}_g\to \mathscr{M}_g$ be the universal curve over the Deligne-Mumford moduli stack of genus $g$ curves.  We say (following \cite{Cousin-Heu}) that $(\mathscr{E}, \nabla)$ admits a universal algebraic isomonodromic deformation if there exists an \'etale $U\to \mathscr{M}_g$ containing $[C]$ in its image such that $(\mathscr{E}, \nabla)$ admits an isomonodromic deformation to $U\times_{\mathscr{M}_g}\mathscr{C}_g$.
\end{dff}
By e.g.~\cite[Theorem A]{Cousin-Heu}, $(\mathscr{E}, \nabla)$ admits a universal algebraic isomonodromic deformation if and only if the orbit of $\rho_{\mathscr{E}, \nabla}$ under the mapping class group of $C$ is finite. (See Section 2.4 of \cite{Cousin-Heu} for an extension of these notions to the case of non-proper curves.)

Thus, using the Riemann existence theorem, Theorem \ref{main-thm} for surfaces without boundary admits a purely algebro-geometric statement:
\begin{thm}\label{main-thm-AG}
Let $C$ be a curve over $\mathbb{C}$ with $\chi(C)<0$, and let $(\mathscr{E}, \nabla)$ be a flat vector bundle on $C$. Suppose that for all finite \'etale maps of curves $f: C'\to C$, $f^*(\mathscr{E}, \nabla)$ admits a universal algebraic isomonodromic deformation. Then $(\mathscr{E}, \nabla)$ has finite monodromy.
\end{thm}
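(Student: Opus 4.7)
The theorem is essentially a translation of Theorem \ref{main-thm} into the language of flat connections, so the plan is to spell out the dictionary carefully and verify that the hypotheses match on the two sides.

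\medskip

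\textbf{Step 1: Set up the monodromy.} Let $\Sigma$ be the topological surface underlying $C$ (possibly with punctures if $C$ is not proper), and fix a base point $x\in C$. The flat vector bundle $(\mathscr{E}, \nabla)$ gives, via parallel transport of horizontal sections, a monodromy representation
\[ \rho = \rho_{\mathscr{E}, \nabla} \colon \pi_1(\Sigma, x) \to GL_m(\mathbb{C}), \]
where $m$ is the rank of $\mathscr{E}$. The condition $\chi(C)<0$ translates to $\chi(\Sigma) < 0$, so Theorem \ref{main-thm} is available for $\Sigma$. The conclusion we want (finite monodromy) is precisely that $\rho$ has finite image, so our goal is to verify the hypothesis of Theorem \ref{main-thm} for $\rho$.

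\medskip

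\textbf{Step 2: Match finite covers on the two sides.} By the Riemann existence theorem, every finite topological covering map $f\colon \Sigma' \to \Sigma$ is the analytification of a unique finite \'etale map of smooth curves $f\colon C'\to C$ (in the punctured case one uses the standard equivalence between finite \'etale covers of $C\setminus D$ and finite covers of $\Sigma$ possibly ramified at the punctures, again compatible with pulling back connections). Pulling back $(\mathscr{E}, \nabla)$ along $f$ gives a flat vector bundle $f^*(\mathscr{E}, \nabla)$ on $C'$, and by construction its monodromy is the restriction of $\rho$ along $f_*\colon \pi_1(\Sigma') \to \pi_1(\Sigma)$, i.e.\ $f^*\rho$ in the notation of Theorem \ref{main-thm}.

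\medskip

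\textbf{Step 3: Apply Cousin--Heu to each cover.} By hypothesis, for every finite \'etale $f\colon C'\to C$, the pullback $f^*(\mathscr{E}, \nabla)$ admits a universal algebraic isomonodromic deformation. By the theorem of Cousin--Heu cited in the excerpt (Theorem A of \cite{Cousin-Heu}, together with its extension to non-proper curves in \S2.4 of loc.\ cit.), this is equivalent to the statement that the $MCG(\Sigma')$-orbit of the isomorphism class of the monodromy of $f^*(\mathscr{E}, \nabla)$ is finite. By Step 2, this orbit is exactly the $MCG(\Sigma')$-orbit of $f^*\rho$.

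\medskip

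\textbf{Step 4: Conclude.} Combining Steps 2 and 3, we have shown that for every finite covering map $f\colon \Sigma'\to \Sigma$ the orbit of $f^*\rho$ under $MCG(\Sigma')$ is finite. This is precisely the hypothesis of Theorem \ref{main-thm}, which therefore concludes that $\rho$ has finite image, i.e.\ $(\mathscr{E}, \nabla)$ has finite monodromy.

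\medskip

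\textbf{Remarks on where the real work is.} There is essentially no new argument in the proof of Theorem \ref{main-thm-AG} beyond Theorem \ref{main-thm}: all the nontrivial content is on the topological side. The only points that require a bit of care, rather than serious obstacles, are (a) the bookkeeping in the non-proper case, where one must check that Cousin--Heu's equivalence between universal algebraic isomonodromic deformations and finite mapping class group orbits still applies (handled in \cite[\S2.4]{Cousin-Heu}), and (b) the identification of finite \'etale covers of $C$ (or $C\setminus D$) with finite covers of the topological surface $\Sigma$ via Riemann existence, which is standard.
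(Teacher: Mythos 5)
Your proposal is correct and matches the paper's approach exactly: the paper treats Theorem \ref{main-thm-AG} as an immediate translation of Theorem \ref{main-thm}, via the Riemann existence theorem (to identify finite \'etale covers with finite topological covers) and Cousin--Heu's Theorem A (to identify ``admits a universal algebraic isomonodromic deformation'' with ``MCG-finite''), which is precisely the dictionary you spell out. The only cosmetic point worth noting is that the paper explicitly restricts to surfaces without boundary here, which is automatic on the algebraic side and which you implicitly handle.
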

\subsection{The arithmetic setting and the $p$-curvature conjecture} The authors became interested in isomonodromic deformations
by way of the Grothendieck-Katz $p$-curvature conjecture \cite{Katz}. The strategy is based on an idea of Kisin, and is closely related to work of Papaioannou \cite{thanos}, Shankar \cite{Shankar}, and Patel-Shankar-Whang \cite{PSW}. 
Given a vector bundle with connection $(\mathscr{E}, \nabla)$ on a curve $C$ over an arbitrary field, we say that
$(\mathscr{E}, \nabla)$ \emph{admits a full set of algebraic sections}
if there exist some curve $C'$ and finite map $C' \rightarrow C$
such that the pullback of $(\mathscr{E}, \nabla)$ to $C'$ is spanned as an $\mathscr{O}_{C'}$-module by flat global sections.

Let $K$ be a finitely-generated field of characteristic zero, and take $C$ and $(\mathscr{E}, \nabla)$ as above.
We can spread this picture out to some integral domain $R\subset K$ with $\operatorname{Frac}(R)=K$,
and reduce modulo any maximal ideal $\mathfrak{m}$ of $R$. Let $(C_\mathfrak{m}, \mathscr{E}_\mathfrak{m}, \nabla_\mathfrak{m})$ denote the base change of 
the spreading-out of $(C, \mathscr{E}, \nabla)$ to $R/\mathfrak{m}$.
\begin{conjecture}[The $p$-curvature conjecture, Grothendieck-Katz]\label{p-curvature-1}
In order that $(\mathscr{E}, \nabla)$ admit a full set of algebraic sections,
it is necessary and sufficient that 
$(\mathscr{E}_\mathfrak{m}, \nabla_\mathfrak{m})$ admit a full set of algebraic sections for all $\mathfrak{m}$ in a non-empty open subset of $\operatorname{Spec}(R)$.
\end{conjecture}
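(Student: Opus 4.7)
The forward implication is a routine spreading-out argument: if $(\mathscr{E}, \nabla)$ admits a full set of algebraic sections via some finite cover $C' \to C$, spread $C' \to C$, $(\mathscr{E}, \nabla)$, and the spanning flat sections over $R$ (shrinking $\Spec(R)$ if necessary), and reduce mod $\mathfrak{m}$. The substantive direction is the converse, and the plan is to use Theorem \ref{main-thm-AG} as the bridge between the arithmetic hypothesis and a topological finiteness statement.

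The crux is to show: if $(\mathscr{E}, \nabla)$ on $C/K$ has the property that $(\mathscr{E}_\mathfrak{m}, \nabla_\mathfrak{m})$ admits a full set of algebraic sections for a Zariski-dense set of closed points $\mathfrak{m}\in \Spec(R)$, then for every finite \'etale cover $f\colon C'\to C$, the pullback $f^*(\mathscr{E}, \nabla)$ admits a universal algebraic isomonodromic deformation. Granting this and noting that the mod-$\mathfrak{m}$ hypothesis is preserved under finite \'etale pullback (spread out $C'$ and pull back the mod-$\mathfrak{m}$ algebraic sections), Theorem \ref{main-thm-AG} then yields that $(\mathscr{E}, \nabla)$ has finite monodromy. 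Finite monodromy in turn gives a full set of algebraic sections: the kernel of $\rho_{\mathscr{E}, \nabla}$ cuts out a finite \'etale cover $C'' \to C$ on which the associated local system is trivial; Riemann-Hilbert together with GAGA (after compactifying and checking regular singularities at punctures) trivializes $(\mathscr{E}, \nabla)$ on $C''$.

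The main obstacle is the crux step above, i.e.\ converting the mod-$\mathfrak{m}$ hypothesis into MCG-finiteness. My approach would be to spread $(C, \mathscr{E}, \nabla)$ out over an \'etale neighborhood $U$ of $[C]$ in $\mathscr{M}_g$, view the isomonodromic deformation as a (transcendental) horizontal section of the non-abelian Gauss-Manin connection on the relative de Rham moduli space $\Mdr \to U$, and argue that the mod-$p$ hypothesis propagates along this connection. Concretely, one would show that the locus in $\Mdr$ of flat bundles admitting a full set of algebraic sections mod $p$ for a positive-density set of primes is constructible and preserved by non-abelian Gauss-Manin, then combine this with integrality/boundedness inputs in the style of Papaioannou \cite{thanos}, Shankar \cite{Shankar}, and Patel-Shankar-Whang \cite{PSW} to force the isomonodromic orbit of $\rho_{\mathscr{E}, \nabla}$ to be algebraic, hence finite over an \'etale neighborhood. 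Making any of this precise --- in particular, transporting the mod-$p$ algebraicity across the non-abelian Gauss-Manin connection and ruling out infinite MCG orbits --- is where the $p$-curvature conjecture remains genuinely open, even after the reformulation supplied by Theorem \ref{main-thm-AG}.
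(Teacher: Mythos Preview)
The statement you were asked to prove is labeled \emph{Conjecture} in the paper, and with good reason: it is the Grothendieck--Katz $p$-curvature conjecture, which remains open. The paper does not prove it; the only comment the paper makes is that ``necessity above is clear,'' i.e.\ the forward direction follows by spreading out, exactly as in your first paragraph. Beyond that, the paper's contribution is to show (via Theorem~\ref{main-thm-AG}) that Conjecture~\ref{p-curvature-1} is \emph{equivalent} to Conjecture~\ref{p-curvature-2}, not to prove either one.

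Your proposal is therefore not to be compared against a proof in the paper --- there is none --- and your own final paragraph correctly concedes that the crux step (transporting the mod-$p$ hypothesis along the non-abelian Gauss--Manin connection to force MCG-finiteness) is precisely where the conjecture remains open. So what you have written is not a proof but an outline of the reformulation strategy the paper itself proposes, together with an honest acknowledgment of the gap. That is fine as exposition, but you should not present it as a proof: the middle paragraph (``My approach would be\ldots'') describes a program, not an argument, and the constructibility and propagation claims there are unproven and, as far as anyone knows, genuinely hard.
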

Note that the hypothesis is independent of the chosen spreading-out, and that necessity above is clear. See \cite{Katz} for a discussion of Conjecture \ref{p-curvature-1}, and a proof in the case $(\mathscr{E}, \nabla)$ arises from the de Rham cohomology of a family of varieties over $C$, endowed with the Gauss-Manin connection.

The authors' main motivation for this paper is the observation that the hypothesis of the $p$-curvature conjecture (namely that $(\mathscr{E}_\mathfrak{m}, \nabla_\mathfrak{m})$ admit a full set of algebraic sections for all $\mathfrak{m}$ in a non-empty open subset of $\operatorname{Spec}(R)$) is stable under pullback. In particular, Theorem \ref{main-thm-AG} suggests the following reformulation of the $p$-curvature conjecture, in terms of the so-called non-abelian Gauss-Manin connection (i.e.~isomonodromic deformation). Let $C, K, \mathscr{E}, \nabla$ be as above. Choose an embedding $K\hookrightarrow \mathbb{C}$.
\begin{conjecture}\label{p-curvature-2}
If $(\mathscr{E}_\mathfrak{m}, \nabla_\mathfrak{m})$ admit a full set of algebraic sections for all $\mathfrak{m}$ in a non-empty open subset of $\operatorname{Spec}(R)$, then the flat vector bundle $(\mathscr{E}, \nabla)_{\mathbb{C}}$ on $C_{\mathbb{C}}$ admits a universal algebraic isomonodromic deformation.
\end{conjecture}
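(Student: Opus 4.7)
The plan is to reduce to Theorem~\ref{main-thm-AG} via the Cousin--Heu characterization of universal algebraic isomonodromy: the conclusion is equivalent to the MCG orbit of the monodromy representation $\rho_{\mathscr{E},\nabla}$ being finite. Observe first that the hypothesis (a full set of algebraic sections modulo $\mathfrak{m}$ for almost all $\mathfrak{m}$) is stable under pullback by finite étale maps, since one can spread out the finite cover along with everything else. Consequently, Conjecture~\ref{p-curvature-2} applied to every finite étale cover of $C$, combined with Theorem~\ref{main-thm-AG}, would recover the classical $p$-curvature conjecture. So Conjecture~\ref{p-curvature-2} is \emph{a priori} a genuine weakening, and one has some room to exploit.

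The technical heart would be to extract an algebraic family from the mod-$\mathfrak{m}$ data. Spread $(C, \mathscr{E}, \nabla)$ and the universal curve $\mathscr{C}_g\to\mathscr{M}_g$ out over an integral domain $R\subset K$. For each maximal ideal $\mathfrak{m}$ in a dense open of $\Spec(R)$, the hypothesis furnishes a finite cover $C'_\mathfrak{m}\to C_\mathfrak{m}$ trivializing the connection, so $(\mathscr{E}_\mathfrak{m},\nabla_\mathfrak{m})$ has vanishing $p$-curvature after pullback; Cartier descent then produces a Frobenius structure which canonically propagates $(\mathscr{E}_\mathfrak{m},\nabla_\mathfrak{m})$ along any deformation of the curve in characteristic $p$. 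One would then hope to assemble these characteristic-$p$ families into an algebraic isomonodromic deformation over a spreading of some étale $U\to \mathscr{M}_g$ containing $[C]$, and to show that the analytic isomonodromic deformation in characteristic zero coincides with this algebraic family. This is the strategy of Kisin, Papaioannou~\cite{thanos}, Shankar~\cite{Shankar}, and Patel--Shankar--Whang~\cite{PSW}, transferred from the setting of a single curve to the setting of the universal curve over moduli.

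The main obstacle is the passage from arithmetic coherence mod every $\mathfrak{m}$ to a single algebraic object over $\mathbb{C}$: mod-$\mathfrak{m}$ Frobenius structures on the non-abelian Gauss--Manin connection must be shown to patch over $\Spec(R)$ into an algebraic, rather than merely formal or analytic, family. This is precisely the difficulty the $p$-curvature conjecture itself encodes, and no existing technique seems to handle it in this generality. Indeed, by the reduction in the first paragraph, Conjecture~\ref{p-curvature-2} applied to all finite covers is equivalent to the full $p$-curvature conjecture, so one should expect any approach to require substantially new arithmetic input---perhaps leveraging the mapping class group symmetry of the character variety to constrain the formal isomonodromic deformations produced by Cartier descent, beyond what is available for a fixed curve.
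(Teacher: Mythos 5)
This statement is labeled a \emph{Conjecture} in the paper, and the paper does not prove it---nor should you attempt to. What the paper establishes (in the sentence immediately following the statement) is only that Conjecture~\ref{p-curvature-2} is \emph{equivalent} to the classical Grothendieck--Katz $p$-curvature conjecture (Conjecture~\ref{p-curvature-1}), via Theorem~\ref{main-thm-AG} together with the standard reduction of the $p$-curvature conjecture to smooth proper curves of genus at least $2$. Your proposal correctly identifies this equivalence: the hypothesis of Conjecture~\ref{p-curvature-2} is stable under pullback by finite \'etale covers, so applying it to all such covers and invoking Theorem~\ref{main-thm-AG} recovers Conjecture~\ref{p-curvature-1}; conversely, Conjecture~\ref{p-curvature-1} gives finite monodromy, which trivially yields a finite mapping class group orbit and hence, by the Cousin--Heu criterion, a universal algebraic isomonodromic deformation. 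You are right to flag that what remains is exactly the open arithmetic content of the $p$-curvature conjecture itself.

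Your further discussion of the Kisin--Papaioannou--Shankar--Patel--Shankar--Whang circle of ideas (Cartier descent and Frobenius structures in characteristic $p$, and the difficulty of patching them into an algebraic family over $\Spec(R)$) is a fair summary of why the problem is hard, but it is not a proof and does not claim to be one. For the purposes of this paper, the intended content of the statement is merely a reformulation of the $p$-curvature conjecture in isomonodromy language, offered as motivation for the topological Theorem~\ref{main-thm}; no proof is expected or given.
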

Conjectures \ref{p-curvature-1} and \ref{p-curvature-2} are equivalent by Theorem \ref{main-thm-AG}, and the well-known fact that the $p$-curvature conjecture may be reduced to the case of smooth proper curves of genus at least $2$.

\begin{remark}
Let $\mathscr{X}/\mathscr{O}_{K,S}$ be a smooth proper curve over the ring of $S$-integers of a number field $K$, and let $(\mathscr{E}, \nabla)$ be an arithmetic $\mathscr{D}_{\mathscr{X}/\mathscr{O}_{K,S}}$-module on $\mathscr{X}$ (this condition is a priori much stronger than the hypotheses of the $p$-curvature conjecture). Then one can use the main result of \cite{Shankar} to see that the analogue of Conjecture \ref{p-curvature-2} for such $(\mathscr{E}, \nabla)$ implies finiteness of monodromy.
\end{remark}

\begin{remark}
To connect our work to the $p$-curvature conjecture, one would like to know something about the behavior of $p$-curvature under isomonodromic deformation.
Unfortunately, it seems very difficult to say anything concrete here.
For instance, one might like to say that the condition of vanishing $p$-curvature is preserved under isomonodromic deformation.
But it's not clear how to make sense of this statement, since isomonodromic deformations don't in general exist integrally.

\end{remark}

\subsection{Plan of the proof of Theorem \ref{main-thm}}

    
    

%
    
   The argument is a proof by induction on the dimension of the representation.  Roughly speaking, if there is some $\gamma \in \pi_1(C)$ such that $\rho(\gamma)$ is not of finite order, we pass to a finite cover, make $\gamma$ a simple closed curve, and cut along $\gamma$.  After cutting, we show that the representation $\rho$ becomes reducible, so we can reduce the problem to a lower-dimensional case (on the cut surface).  If there is no such $\gamma$, we conclude by Lemma \ref{gln_finite_order}.
    

\subsection{Questions}
Our proof of Theorem \ref{main-thm} is geometric; Corollary \ref{main-cor} suggests that one might look for a purely group-theoretic proof. More generally, one might ask for an intrinsic characterization of those groups for which the analogue of Corollary \ref{main-cor} holds true.
\begin{dff}[Locally Extended Residually Finite (LERF)]
A group $G$ is said to be Locally Extended Residually Finite (LERF) if for every finitely-generated subgroup $H\subset G$, $H$ is closed in the profinite topology on $G$.
\end{dff}
\begin{question}
Suppose $G$ is finitely-generated and LERF. Let $$\rho: G\to GL_n(\mathbb{C})$$ be a representation such that for each finite-index subgroup $H\subset G$, the orbit of the isomorphism class of $\rho|_H$ under $\text{Out}(H)$ is finite. Does $\rho$ necessarily have finite image?
\end{question}
Note that Scott shows that surface groups are LERF \cite{Scott}; this fact is crucially used in the proof of Theorem \ref{main-thm}.

The next question was suggested to us by Junho Peter Whang; 
it asks whether, when the genus is large compared to the rank, we can eliminate the finite covers $\Sigma'$ from the statement of Theorem \ref{main-thm}.
A positive answer for $SL_2$ is given by \cite{BGMW}.
\begin{question}
Suppose $m$ is a positive integer.  Is the following statement true for all $\Sigma$ of sufficiently large genus:
For any representation $$\rho: \pi_1(\Sigma)\to GL_m(\mathbb{C}),$$
if the orbit of (the isomorphism class of) $f^*(\rho)$ under the mapping class group $MCG(\Sigma)$ is finite, then $\rho$ has finite image?
\end{question}
Finally, we propose two variants on our main theorem; we suspect both are true, and could be proven by similar methods, but we have not verified either.
\begin{question}
Does the statement of Theorem \ref{main-thm} remain true, if $\Sigma'$ is instead allowed to range over all branched covers of $\Sigma$ of degree 2?
\end{question}
\begin{question}
(Junho Peter Whang)
Suppose 
\[ \rho: \pi_1(\Sigma)\to GL_m(\mathbb{C}) \]
is an absolutely irreducible representation,
such that for each finite covering map $$f: \Sigma'\to \Sigma,$$ 
the orbit of (the isomorphism class of) $f^*(\rho)$ under the mapping class group $MCG(\Sigma')$ has compact closure in the character variety
classifying conjugacy classes of maps $\pi_1(\Sigma') \rightarrow GL_m(\mathbf{C})$.  Must $\rho$ be unitarizable?
(See \cite{Sikora} for the definition and properties of character variety.)
\end{question}

\subsection{Acknowledgements}
We would like to thank
Yves Andr\'e,
H\'el\`ene Esnault,
Benson Farb,
Mark Kisin,
Ananth Shankar,
Yunqing Tang, and
Junho Peter Whang
for helpful conversations. We would also like to thank the anonymous referees for their extremely useful comments. Litt is supported by NSF Grant DMS-2001196; Lawrence is supported by NSF Grant DMS-1705140.

\section{MCG-finiteness}
\subsection{Definitions}





Let $\Sigma$ be an orientable surface, possibly with boundary/punctures; let $p\in \Sigma$ be a point. Let $\text{MCG}(\Sigma)$ be the mapping class group of $\Sigma$.  (See \cite[Section 2.1]{FM}
for a discussion of mapping class groups.
In particular, recall that an element of $MCG(\Sigma)$ must fix $\partial \Sigma$ point-wise,
but may permute punctures.)

The natural map $$MCG(\Sigma)\to \text{Out}(\pi_1(\Sigma, p))$$ induces an action of $MCG(\Sigma)$ on the set of isomorphism classes of representations
of $\pi_1(\Sigma, p)$, as we now explain.

We say that two representations
\[ \rho_1, \rho_2 \colon \pi_1(\Sigma, p) \rightarrow GL_r(\mathbb{C}) \]
are \emph{isomorphic} 
if there is some $g \in GL_r(\mathbb{C})$
such that $\rho_2 = g \rho_1 g^{-1}$.

Any mapping class in $MCG(\Sigma)$ has a representative $T \colon \Sigma \rightarrow \Sigma$ that fixes the basepoint $p$.
Then $T$ gives an automorphism $\pi_1(T)$ of $\pi_1(\Sigma)$;
we denote by $T^* \rho$ the representation of $\pi_1(\Sigma)$
obtained by precomposing $\rho$ with $\pi_1(T)$.
If $T'$ is another representative of the same mapping class, also fixing $p$,
then $T'$ and $T$ agree up to an inner automorphism of $\pi_1(T)$,
so $(T')^* \rho$ is isomorphic (i.e.\ conjugate) to $T^* \rho$.

\begin{dff}
\label{mcg_finite}
Say a representation $$\rho: \pi_1(\Sigma, p)\to GL_r(\mathbb{C})$$ is \emph{MCG-finite} if the orbit of its isomorphism class under the action of $MCG(\Sigma)$ is finite. 

Say $\rho$ is \emph{universally MCG-finite} if, for any finite covering map $\Sigma' \rightarrow \Sigma$, its pullback to $\Sigma'$ is MCG-finite.
\end{dff}
\begin{remark}\label{birman-remark}
One way to produce MCG-finite representations is as follows. Let $\Sigma$ be a closed orientable surface, $p\in \Sigma$ a point, and consider the Birman exact sequence $$1\to \pi_1(\Sigma, p)\to MCG(\Sigma\setminus p)\to MCG(\Sigma)\to 1.$$ Then if $\rho: MCG(\Sigma\setminus p)\to GL_m(\mathbb{C})$ is any representation, $\rho|_{\pi_1(\Sigma, p)}$ is MCG-finite, by the normality of $\pi_1(\Sigma, p)$ in $MCG(\Sigma\setminus p)$. See Example \ref{parshin-example} for an example of a representation $\rho$ of $MCG(\Sigma\setminus p)$ such that $\rho|_{\pi_1(\Sigma, p)}$ has infinite image.
\end{remark}

Our goal in this paper is to show that \emph{universally} MCG-finite representations have finite image.

In order to prove this result, we will need a refined notion of universal MCG-finiteness for surfaces obtained by cutting a given surface along special collections of simple closed curves.

\begin{dff}
Let $\Sigma$ be an orientable surface, possibly with boundary/punctures. Let $\gamma_1, \cdots, \gamma_r$ be disjoint simple closed curves on $\Sigma$. We say that $\{\gamma_1, \cdots, \gamma_r\}$ \emph{are not jointly separating} if $\Sigma \setminus \bigcup_i \gamma_i$ is connected.
\end{dff}

\begin{figure}[h]
\centering
\includegraphics[scale=.8]{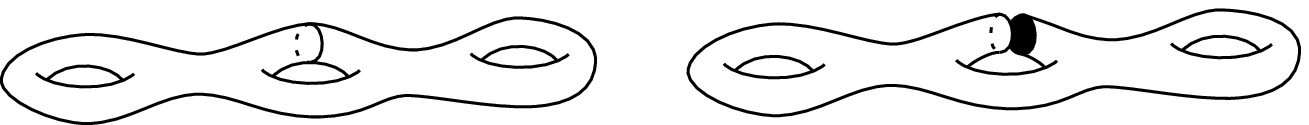}
 \caption{The surfaces $\Sigma$ and $\Ccut$.}
\end{figure}

Suppose $\Sigma$ is an orientable surface and $\{\gamma_1,\cdots, \gamma_r\}$ is a collection of disjoint curves in $\Sigma$. Then we define $\Ccut(\gamma_1, \cdots, \gamma_r)$ to be the surface with with boundary obtained by removing an $\epsilon$-neighborhood of $\bigcup_i \gamma_i$ from $\Sigma$.

\begin{dff}
Let $\Sigma$ be an orientable surface, possibly with boundary/punctures. Let $(\gamma_1, \cdots, \gamma_r)$ be a collection of simple closed curves in $\Sigma$ which are not jointly separating, with $r \geq 0$. Then a representation $$\rho: \pi_1(\Ccut(\gamma_1, \cdots, \gamma_r))\to GL_m(\mathbb{C})$$ is \emph{universally MCG-finite relative to $\Sigma$} if for each finite covering space $f: \Sigma'\to \Sigma$, and each connected component $X$ of $\Ccut'(f^{-1}(\gamma_1), \cdots, f^{-1}(\gamma_r))$, the representation $\rho|_{\pi_1(X)}$ is MCG-finite.
\end{dff}
We will in fact prove a version of Theorem \ref{main-thm} in the relative setting:
\begin{thm}\label{strong-theorem}
Let $\Sigma$ be an orientable surface, possibly with boundary/punctures. Let $(\gamma_1, \cdots, \gamma_r)$ be a collection of simple closed curves in $\Sigma$ which are not jointly separating. Suppose that $\chi(\Ccut(\gamma_1, \cdots, \gamma_r))<0$  and let $$\rho: \pi_1(\Ccut(\gamma_1, \cdots, \gamma_r))\to GL_m(\mathbb{C})$$ be a representation which is universally MCG-finite relative to $\Sigma$. Then $\rho^{\text{ss}}$ has finite image.
\end{thm}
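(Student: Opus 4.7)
The plan is induction on the rank $m$. First I reduce to the case that $\rho$ is semisimple, using that semisimplification commutes with restriction and with the $MCG$-action on isomorphism classes; the conclusion concerns only $\rho^{\text{ss}}$ anyway. The argument then bifurcates based on whether every element $\delta \in \pi_1(\Ccut(\gamma_1,\ldots,\gamma_r))$ has $\rho(\delta)$ of finite order. In this first case, $\rho(\pi_1(\Ccut))$ is a finitely generated torsion subgroup of $GL_m(\mathbb{C})$, and the promised Lemma \ref{gln_finite_order} (a Burnside--Schur type result) yields finiteness. This also handles the base case $m=1$.

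Otherwise there is some $\gamma \in \pi_1(\Ccut)$ with $\rho(\gamma)$ of infinite order, which I view as a loop in $\Sigma$ lying in $\Ccut$. By Scott's theorem that surface groups are LERF, I can pass to a finite cover $f: \Sigma' \to \Sigma$ in which some lift $\tilde\gamma$ of $\gamma$ is a simple closed curve; by taking a further cover if necessary, I arrange that the augmented collection $\{f^{-1}(\gamma_1),\ldots,f^{-1}(\gamma_r),\tilde\gamma\}$ remains not jointly separating. Let $X$ denote the connected component of the $\{f^{-1}(\gamma_i)\}$-cut of $\Sigma'$ that contains $\tilde\gamma$. The Dehn twist $T_{\tilde\gamma}$ lies in $MCG(X)$, so by hypothesis the isomorphism class of $\rho|_{\pi_1(X)}$ has finite $T_{\tilde\gamma}$-orbit; some $T_{\tilde\gamma}^n$ fixes the class via a conjugating element $g \in GL_m(\mathbb{C})$. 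Unpacking the explicit action of $T_{\tilde\gamma}^n$ on $\pi_1(X)$ gives $g\rho(\alpha)g^{-1} = \rho(\alpha)$ for $\alpha \in \pi_1(X \setminus \tilde\gamma)$, and $g\rho(\delta)g^{-1} = \rho(\delta)\rho(\tilde\gamma)^{\pm n}$ for $\delta$ crossing $\tilde\gamma$ once. If $\rho|_{\pi_1(X \setminus \tilde\gamma)}$ were irreducible, Schur's lemma would force $g$ to be scalar, hence $\rho(\tilde\gamma)^{\pm n} = 1$, contradicting the infinite order of $\rho(\tilde\gamma)$. So $\rho|_{\pi_1(X \setminus \tilde\gamma)}$ is reducible, and by semisimplicity it splits as a direct sum of strictly lower-rank subrepresentations.

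Each summand of this decomposition should inherit universal MCG-finiteness relative to $\Sigma'$ with respect to the augmented cut collection, since the canonical decomposition commutes with pullback under further covers and with the $MCG$-action. The inductive hypothesis applied to each lower-rank summand then produces finite image on $\pi_1(X \setminus \tilde\gamma)$. To conclude, one needs to promote finiteness on $\pi_1(X \setminus \tilde\gamma)$ to finiteness on $\pi_1(X)$ itself, and then, by varying the component $X$ (equivalently, varying the covers $f$ and the choice of $\tilde\gamma$), finiteness on all of $\pi_1(\Ccut)$. This final reassembly step is the main obstacle I foresee: it requires tracking how $\rho(\tilde\gamma)$ acts on the invariant-subspace decomposition. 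If $\rho(\tilde\gamma)$ preserves the decomposition, then $\rho|_{\pi_1(X)}$ itself is reducible and the induction applies directly to the summands; if $\rho(\tilde\gamma)$ only permutes the summands, an appropriate power of $\tilde\gamma$ must be substituted for $\tilde\gamma$, and one iterates. This bookkeeping, together with the delicate choice of finite covers so that the not-jointly-separating condition is maintained throughout, is where I expect the core difficulty to lie.
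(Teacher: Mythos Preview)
Your overall architecture---induction on rank, Scott's theorem to make $\gamma$ simple, the Dehn twist intertwiner $g$, and the dichotomy on whether $g$ is scalar---matches the paper. But there is a genuine gap at the end, and one earlier error.

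The error: semisimplicity of $\rho$ as a $\pi_1(\Ccut)$-representation does \emph{not} imply that the restriction $\rho|_{\pi_1(X\setminus\tilde\gamma)}$ is semisimple, so your ``splits as a direct sum of strictly lower-rank subrepresentations'' is unjustified. The paper never assumes semisimplicity of the restriction; it only uses the existence of a proper invariant subspace $W$ (from the centralizer of $g$, via Lemma~\ref{nonsimple}) and applies the inductive hypothesis to $W^{\text{ss}}$ and $(V/W)^{\text{ss}}$ separately.

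The main gap is exactly the reassembly step you flag. Your proposed fix---iterating by passing to powers of $\tilde\gamma$---does not terminate in any obvious way, and even if $\rho(\tilde\gamma)$ preserved the decomposition you would still need to know it has finite order on each piece, which is essentially the original problem. The paper sidesteps this entirely by changing what the induction is asked to produce. Rather than concluding finiteness of $\rho$ on $\pi_1(X)$ or $\pi_1(\Ccut)$ from the inductive step, the paper only extracts from the induction that $\rho(\gamma)$ is \emph{quasi-unipotent}: since $\gamma$ is (isotopic to) a boundary loop of the further-cut surface $\Ccut'$, it lies in $\pi_1(\Ccut')$, and the inductive hypothesis says $\rho|_{\pi_1(\Ccut')}$ has finite image on $W^{\text{ss}}$ and on $(V/W)^{\text{ss}}$, hence $\rho(\gamma)$ is quasi-unipotent. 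This is done for \emph{every} $\gamma$ that is nontrivial modulo boundary in $H_1(\Ccut)$, not just for one chosen bad element. The global conclusion then comes from a separate trace argument (Corollary~\ref{gln_quasi_finite}, adapted from Tits): if $\rho(\gamma)$ is quasi-unipotent for all such $\gamma$, then $\rho^{\text{ss}}$ has finite image. This trace argument is the missing ingredient that replaces your reassembly.
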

Here $\rho^{\text{ss}}$ is the semi-simplification of $\rho$.

Theorem \ref{main-thm} will follow from Theorem \ref{strong-theorem} by Lemma \ref{exact_seq}; the modified statement of Theorem \ref{strong-theorem} is more amenable to geometric constructions, e.g.~to cutting the surface in question.

It is not clear to us whether semi-simplification is necessary in Theorem \ref{strong-theorem}.
\begin{question}\label{ss-question}
Let $\Sigma$ be an orientable surface, possibly with boundary/punctures. Let $(\gamma_1, \cdots, \gamma_r)$ be a collection of simple closed curves in $\Sigma$ which are not jointly separating. Suppose that $\chi(\Ccut(\gamma_1, \cdots, \gamma_r))<0$  and let $$\rho: \pi_1(\Ccut(\gamma_1, \cdots, \gamma_r))\to GL_m(\mathbb{C})$$ be a representation which is universally MCG-finite relative to $\Sigma$. Does $\rho$ necessarily have finite image?
\end{question}
\subsection{Basic properties of MCG-finiteness}
\begin{lem}\label{subquot} The following hold:
\begin{itemize}
\item The semi-simplification of an MCG-finite representation is MCG-finite.
\item Any sub-quotient of the semi-simplification of an MCG-finite representation is MCG-finite.

\item Let $\Sigma$ be an orientable surface, possibly with boundary/punctures.  Let $(\gamma_1, \cdots, \gamma_r)$ be a collection of simple closed curves in $\Sigma$ which are not jointly separating. Then any sub-quotient of the semi-simplification of a representation of $\pi_1(\Ccut(\gamma_1, \cdots, \gamma_r))$ which is universally MCG-finite relative to $\Sigma$ is universally MCG-finite relative to $\Sigma$.
\end{itemize}
\end{lem}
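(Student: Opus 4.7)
My plan is to exploit two basic observations. First, semi-simplification is functorial under outer automorphisms of $\pi_1$, and hence commutes with the action of the mapping class group on isomorphism classes of representations. Second, any subquotient of a semisimple representation is itself semisimple and, up to isomorphism, a direct summand of it, which severely constrains the possible subquotients.

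For the first bullet, the identity $(T^*\rho)^{\text{ss}} \cong T^*(\rho^{\text{ss}})$ for $T \in MCG(\Sigma)$ is immediate from the fact that an outer automorphism of $\pi_1(\Sigma)$ carries any Jordan--H\"older filtration of $\rho$ to one of $T^*\rho$; hence the orbit of $[\rho^{\text{ss}}]$ is the image of the finite orbit of $[\rho]$, and is finite. For the second bullet, I would write $\rho^{\text{ss}} \cong \bigoplus_i V_i^{\oplus m_i}$ with pairwise non-isomorphic irreducibles $V_i$; any subquotient $\sigma$ of $\rho^{\text{ss}}$ is then isomorphic to $\bigoplus_i V_i^{\oplus n_i}$ with $0 \le n_i \le m_i$. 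Letting $S$ be the (finite) set of isomorphism classes of irreducibles appearing across the MCG-orbit of $\rho^{\text{ss}}$, every $T^*\sigma$ is a direct summand of $T^*\rho^{\text{ss}}$ and so is a direct sum of elements of $S$ with bounded multiplicities, leaving only finitely many possible isomorphism classes.

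The third bullet is the main obstacle, since neither semi-simplification nor the structure of a direct summand is preserved by restriction along an arbitrary subgroup inclusion, and there is no a priori link between $MCG(\Ccut)$-finiteness of a representation and $MCG(X)$-finiteness of its restriction to a subsurface $X$. The step that unlocks the proof is the observation that, because $(\gamma_1,\dots,\gamma_r)$ are not jointly separating, $\Ccut$ is connected; hence each component $X$ of the analogous cut of a finite cover $\Sigma' \to \Sigma$ maps as a connected finite cover onto $\Ccut$, so $\pi_1(X)$ is a \emph{finite-index} subgroup of $\pi_1(\Ccut)$. Restriction to any finite-index subgroup preserves semisimplicity, by a standard Maschke-style averaging of an invariant splitting over the finite quotient by the normal core of $\pi_1(X)$. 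Granting this, I choose a splitting $\rho^{\text{ss}} \cong \sigma \oplus \tau$ of $\pi_1(\Ccut)$-representations; restricting to $\pi_1(X)$ yields a decomposition of the semisimple representation $\rho^{\text{ss}}|_{\pi_1(X)}$, so $\sigma|_{\pi_1(X)}$ is itself semisimple, with composition factors lying among those of $\rho|_{\pi_1(X)}$, hence of $(\rho|_{\pi_1(X)})^{\text{ss}}$. Thus $\sigma|_{\pi_1(X)}$ is isomorphic to a direct summand of $(\rho|_{\pi_1(X)})^{\text{ss}}$, and since $\rho|_{\pi_1(X)}$ is MCG-finite by hypothesis, the second bullet applies to show $\sigma|_{\pi_1(X)}$ is MCG-finite, as required.
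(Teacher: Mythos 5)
Your proof is correct and for the first two bullets it matches the paper's argument: the mapping class group action commutes with semi-simplification, and a semisimple representation has only finitely many subquotients up to isomorphism, with multiplicities bounded by those of the ambient representation, so finiteness of the orbit of $\rho^{\text{ss}}$ bounds the orbit of any subquotient. For the third bullet the paper says only that it is proved ``in exactly the same way,'' and you have correctly supplied the step this elides: since $\gamma_1,\dots,\gamma_r$ are not jointly separating, $\Ccut$ is connected, so each component $X$ of the cut preimage in a finite cover $\Sigma'\to\Sigma$ maps to $\Ccut$ as a connected finite covering, making $\pi_1(X)$ a \emph{finite-index} subgroup of $\pi_1(\Ccut)$; then, because restriction to a finite-index subgroup preserves semisimplicity over $\mathbb{C}$, one has $\rho^{\text{ss}}|_{\pi_1(X)}\cong(\rho|_{\pi_1(X)})^{\text{ss}}$, so the restricted subquotient is a direct summand of $(\rho|_{\pi_1(X)})^{\text{ss}}$ and the second bullet applies. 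This is the right way to fill in the paper's terse reduction; without the finite-index semisimplicity fact, $\rho^{\text{ss}}|_{\pi_1(X)}$ would have no a priori reason to be semisimple and could have infinitely many subquotients, and the argument would not close.
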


\begin{proof}
The first statement follows from the fact that for representations $V,W$, we have $V^{ss}\simeq W^{ss}$ if $V\simeq W$; 
thus, if the mapping class group orbit of $V$ is finite, then the same is true of $V^{ss}$.

To see the second statement, take $V$ an MCG-finite representation and $W$ a sub-quotient of $V^{ss}$.  Then for any $T \in MCG(\Sigma)$, we know that $T^* W$ is a sub-quotient of $T^* V^{ss}$.  The set of sub-quotients of $\{T^*V^{ss}\}_{m\in \mathbb{Z}}$ is finite by MCG-finiteness; hence there are only finitely many possibilities for $T^*W$, as desired. 

The third claim is proved in exactly the same way.
\end{proof}

Next we will see that
the property of universal MCG-finiteness is preserved
when we cut $\Sigma$ along a non-separating simple curve $\gamma$.

\begin{lemma}
\label{umf_cut}
Let $\Sigma$ be an orientable surface, possibly with boundary, and let $(\gamma_1, \cdots, \gamma_r, \cdots, \gamma_s)$ be a collection of disjoint simple closed curves on $\Sigma$ which are not jointly separating.
Suppose $\rho: \pi_1(\Ccut(\gamma_1, \cdots, \gamma_r)) \rightarrow \GL(V)$ is universally MCG-finite relative to $\Sigma$ (or, if $r = 0$, that $\rho$ is universally MCG-finite).
Then the restriction $\rho | _{\Ccut(\gamma_1, \cdots, \gamma_s)}$ is universally MCG-finite relative to $\Sigma$.
\end{lemma}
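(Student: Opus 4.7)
The plan is to reduce the claim to the following local statement: for each finite cover $f: \Sigma' \to \Sigma$ and each connected component $Y$ of $\Ccut'(f^{-1}(\gamma_1), \ldots, f^{-1}(\gamma_s))$, the restriction $\rho|_{\pi_1(Y)}$ is $MCG(Y)$-finite. Since cutting along additional curves only refines the connected-components decomposition, there is a unique connected component $X$ of $\Ccut'(f^{-1}(\gamma_1), \ldots, f^{-1}(\gamma_r))$ containing $Y$. The hypothesis that $\rho$ is universally MCG-finite relative to $\Sigma$ (or universally MCG-finite when $r=0$) tells us exactly that $\rho|_{\pi_1(X)}$ is $MCG(X)$-finite. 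So the only work is to transfer finiteness from $MCG(X)$-orbits to $MCG(Y)$-orbits.

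The key geometric input is an \emph{extension-by-identity} map $e: MCG(Y) \to MCG(X)$. Given a homeomorphism $\phi: Y \to Y$ that fixes $\partial Y$ pointwise, define $\bar\phi: X \to X$ to agree with $\phi$ on $Y$ and with the identity on the closure of $X \setminus Y$; continuity is automatic since $\phi$ fixes the overlapping boundary, and the same extension applied to isotopies shows $e$ is well defined on mapping classes. Choose a basepoint $p \in Y$; then $\bar\phi$ may be isotoped to fix $p$, and the inclusion $\iota: \pi_1(Y, p) \hookrightarrow \pi_1(X, p)$ satisfies $\bar\phi_* \circ \iota = \iota \circ \phi_*$. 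Precomposing $\rho$ with this identity yields
$$\phi^*(\rho|_{\pi_1(Y)}) = (\bar\phi^* \rho)|_{\pi_1(Y)}$$
as representations of $\pi_1(Y, p)$.

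The lemma now follows by a soft counting argument: restriction of representations to a subgroup descends to a well-defined map on isomorphism classes, because an intertwiner $g \in GL_m(\mathbb{C})$ between two representations of $\pi_1(X)$ is automatically an intertwiner between their restrictions to $\pi_1(Y)$. By hypothesis the set $\{[\bar\phi^* \rho] : \bar\phi \in MCG(X)\}$ is finite, so its image under this restriction map, which by the displayed identity contains $\{[\phi^*(\rho|_{\pi_1(Y)})] : \phi \in MCG(Y)\}$, is also finite. Running this argument over every finite cover $f: \Sigma' \to \Sigma$ and every component $Y$ produces the desired universal MCG-finiteness relative to $\Sigma$.

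The one step that requires care is the construction of $e$ and the compatibility of its action on $\pi_1$ with restriction of $\rho$; this is essentially a basepoint-keeping gluing argument and will be the main (but modest) obstacle. Crucially, we need only the \emph{existence} of extensions $\bar\phi$ of each $\phi \in MCG(Y)$, never injectivity or surjectivity of $e$, so no delicate analysis of kernels (e.g.\ Dehn twists about the new boundary components of $Y$) or cokernels of $e$ is required.
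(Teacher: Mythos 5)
Your proof is correct and follows essentially the same approach as the paper's: extend mapping classes on the component $Y$ of the $s$-cut by the identity, using the fact that mapping classes fix the boundary pointwise, then observe that this extension is compatible with restricting $\rho$. The paper phrases the extension as going all the way up to $\Sigma'$, whereas you go only to the $r$-cut component $X$; your version is actually the cleaner one, since $\rho$ lives on $\pi_1(X)$ rather than $\pi_1(\Sigma')$, and you spell out the intertwiner/counting step that the paper leaves implicit.
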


\begin{proof}
This is immediate from the definitions. Indeed, let $f: \Sigma'\to \Sigma$ be any finite covering map, and let $X$ be a component of $\Ccut'(f^{-1}(\gamma_1), \cdots, f^{-1}(\gamma_s))$. Then any mapping class on $X$ extends to a mapping class on $\Sigma'$ (recall that mapping classes on surface with boundary must fix the boundary, by definition). 
The result follows.
\end{proof}

Next we'll work out a concrete consequence of MCG-finiteness.
Suppose $\rho$ is MCG-finite, and
let $\gamma$ be a simple closed curve on $\Sigma$,
not passing through the base-point $p$.
Consider the Dehn twist $T_{\gamma}$.
MCG-finiteness implies that for some $m>0$, there is an isomorphism
\begin{equation}\label{isom_exists} (T_{\gamma}^m)^* \rho \cong \rho.
\end{equation}

The explicit geometric construction of the Dehn twist $T_{\gamma}$ gives a bona fide automorphism (not just an outer automorphism)
of $\pi_1(\Sigma, p)$.
Hence, there is a preferred choice of isomorphism 
between the underlying vector spaces of
$(T_{\gamma}^m)^*\rho$ and $\rho$;
we'll use this isomorphism without comment
in what follows. 

The data of the isomorphism (\ref{isom_exists})
is the same as that of a linear map $g: V \overset{\sim}{\rightarrow} V$, intertwining the actions of $\pi_1(\Sigma, p)$ via $(T^m_\gamma)^*\rho$ and $\rho$ --- we call $g$ an ``intertwining operator." Thus we have shown:

\begin{lemma}
\label{dehn_conjugate}
Suppose $\rho$ is MCG-finite and $\gamma$ is a simple closed curve on $C$.  Then there exist a positive integer $m$ and an automorphism $g$ of $V$ such that, for every $\delta \in \pi_1(\Sigma, p)$, we have
\[ \rho (T_\gamma^m \delta) = g \rho(\delta) g^{-1}. \]
\end{lemma}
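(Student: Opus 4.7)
The plan is to unpack the setup already laid out in the paragraph preceding the lemma and convert it into a clean finite-orbit pigeonhole argument. First I would fix a concrete representative of the Dehn twist. Since $\gamma$ is a simple closed curve on $\Sigma$ not passing through $p$, one can choose a diffeomorphism $T_\gamma \colon \Sigma \to \Sigma$ in the isotopy class of the Dehn twist about $\gamma$ whose support is contained in a tubular neighborhood of $\gamma$ disjoint from $p$. In particular, $T_\gamma$ fixes $p$ pointwise, so it induces an honest (not merely outer) automorphism $\phi := \pi_1(T_\gamma)$ of $\pi_1(\Sigma, p)$, and $(T_\gamma^m)^* \rho = \rho \circ \phi^m$ for every integer $m$.

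Next I would invoke the MCG-finiteness hypothesis. The assignment $k \mapsto [(T_\gamma^k)^* \rho]$ is a map from $\mathbb{Z}$ to the MCG-orbit of the isomorphism class $[\rho]$, which by assumption is a finite set. By the pigeonhole principle, there exist integers $k_1 < k_2$ with $[(T_\gamma^{k_1})^* \rho] = [(T_\gamma^{k_2})^* \rho]$. Pulling back along $(T_\gamma^{k_1})^*$ (i.e., precomposing with $\phi^{-k_1}$, which is a bijection on $\pi_1(\Sigma,p)$) yields $[(T_\gamma^m)^* \rho] = [\rho]$ with $m := k_2 - k_1 > 0$.

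Finally, I would translate this equality of isomorphism classes into the explicit conjugation statement. By definition, $[(T_\gamma^m)^* \rho] = [\rho]$ means there is some $g \in \GL(V)$ such that for every $\delta \in \pi_1(\Sigma,p)$,
\[
((T_\gamma^m)^* \rho)(\delta) \;=\; g\, \rho(\delta)\, g^{-1}.
\]
Since $(T_\gamma^m)^* \rho$ is by definition $\rho \circ \phi^m$, and the left-hand side of the lemma's identity is just the common shorthand $\rho(T_\gamma^m \delta) := \rho(\phi^m(\delta))$, this is exactly the asserted formula. There is essentially no obstacle here: all the content is in unwinding the definitions of ``isomorphism of representations,'' ``action of MCG on isomorphism classes,'' and the fact that $T_\gamma$ admits a basepoint-fixing representative, all of which are already spelled out in the preceding discussion.
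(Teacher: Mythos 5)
Your proof is correct and follows essentially the same route as the paper: choose a basepoint-fixing representative of the Dehn twist to get an honest automorphism $\phi$ of $\pi_1(\Sigma,p)$, use finiteness of the orbit to find $m>0$ with $(T_\gamma^m)^*\rho\cong\rho$, and unwind the definition of isomorphism to produce the intertwiner $g$. The only difference is that you spell out the pigeonhole step explicitly, whereas the paper simply asserts the existence of such an $m$ from MCG-finiteness; this is an elaboration, not a different argument.
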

Note that if $\rho$ is simple, the intertwining operator $g$ above is unique up to scaling, by Schur's lemma.

\begin{dff}
\label{mod_bdry}
Let $(\Sigma,p)$ be an orientable surface, possibly with boundary/punctures. 
We denote by $H_1(\Sigma) = H_1(\Sigma, \mathbb{Z})$ the homology of $\Sigma$ with \emph{integral} coefficients.

Let $\overline{H_1(\Sigma)}$ denote the quotient of $H_1(\Sigma)$ by the span of classes of boundary components and loops around punctures.

We say that $\gamma\in \pi_1(\Sigma, p)$ is \emph{nontrivial modulo boundary} (in $H_1(\Sigma)$)
if the class of $\gamma$ in $\overline{H_1(\Sigma)}$
is non-trivial. 

\end{dff}

\begin{remark}
Note that $\overline{H_1(\Sigma)}$ is naturally identified with the first homology of the compact orientable surface without boundary obtained by filling in the punctures of $\Sigma$ and gluing caps onto the boundary components of $\Sigma$. In particular, $\overline{H_1(\Sigma)}$ has a well-defined intersection product $\langle -, -\rangle$; we abuse notation and denote the induced product on $H_1(\Sigma)$ (which is in general no longer non-degenerate) via $\langle - , - \rangle$ as well. 
\end{remark}

We now show that the property of having finite image is preserved under extensions of universally MCG-finite representations.

\begin{lemma}
\label{exact_seq}
Suppose $\chi(\Sigma)<0$, and $\rho: \pi_1(\Sigma) \rightarrow \GL(V)$ fits in an exact sequence of representations
\[0 \rightarrow \rho_1 \rightarrow \rho \rightarrow \rho_2 \rightarrow 0,\]
where \(\rho_1\) and \(\rho_2\) have finite image,
and suppose $\rho$ itself is universally MCG-finite.
Then $\rho$ has finite image.  
\end{lemma}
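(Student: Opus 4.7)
The plan is a two-stage reduction: first pass to a finite cover where $\rho_1$ and $\rho_2$ become trivial, which reduces $\rho$ to a unipotent abelian representation, and then exploit MCG-finiteness on a high-genus further cover, together with symplectic rigidity, to force this unipotent part to vanish. To implement the first stage, I would choose a finite covering $f\colon \Sigma' \to \Sigma$ such that both $f^*\rho_1$ and $f^*\rho_2$ are trivial; this exists because $\rho_1, \rho_2$ have finite image. Then $f^*\rho$ takes values in the abelian unipotent subgroup $U = I + \Hom(V_2, V_1) \subset GL(V)$ and so factors through a linear map $A\colon H_1(\Sigma'; \mathbb{Z}) \to W := \Hom(V_2, V_1)$. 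Note that $f^*\rho$ remains universally MCG-finite on $\Sigma'$, since every finite cover of $\Sigma'$ is also a finite cover of $\Sigma$. It therefore suffices to show that $A$ vanishes after pullback to some further finite cover $\Sigma'' \to \Sigma'$, for then $\rho$ is trivial on $\pi_1(\Sigma'')$ and hence $\rho$ itself has finite image.

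Next I would pass to a cover $\Sigma'' \to \Sigma'$ whose genus $g''$ satisfies $2g'' > d_1 d_2$, where $d_i := \dim V_i$. Let $A''$ denote the induced map $H_1(\Sigma''; \mathbb{Z}) \to W$, and let $\Gamma \subset MCG(\Sigma'')$ be a finite-index subgroup fixing the isomorphism class of $\rho|_{\pi_1(\Sigma'')}$. For each $T \in \Gamma$, an intertwining operator realizing $T^*\rho \cong \rho$ must preserve the image of $\rho$; modulo the stabilizer of $\rho$ it reduces to a pair $(g_{T,1}, g_{T,2}) \in GL(V_1) \times GL(V_2)$, and the intertwining condition becomes
\[
A'' \circ T_* = g_{T,1} \circ A'' \circ g_{T,2}^{-1}.
\]
Let $B \subset H^1(\Sigma''; \mathbb{C})$ be the image of the dual map $(A'')^*\colon W^* \to H^1(\Sigma''; \mathbb{C})$. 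A short computation shows that $B$ is unchanged under the substitution $A'' \mapsto g_1 A'' g_2^{-1}$, so $B$ depends only on the isomorphism class of $\rho|_{\pi_1(\Sigma'')}$; the equivariance above then forces $T^*B = B$ for every $T \in \Gamma$. Thus $B$ is a $\Gamma$-invariant subspace of $H^1(\Sigma''; \mathbb{C})$ of dimension at most $d_1 d_2$.

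Finally I would conclude via symplectic rigidity. The action of $\Gamma$ on $\overline{H_1(\Sigma''; \mathbb{Q})}$ factors through a finite-index subgroup of $\operatorname{Sp}_{2g''}(\mathbb{Z})$, which by Borel density is Zariski-dense in $\operatorname{Sp}_{2g''}(\mathbb{C})$, acting irreducibly on $\overline{H^1(\Sigma''; \mathbb{C})}$. Since $\dim B \le d_1 d_2 < 2g''$, the intersection $B \cap \overline{H^1(\Sigma''; \mathbb{C})}$ must be zero, so $B$ is supported on the (low-dimensional) subspace of $H^1$ dual to puncture and boundary loops. The main obstacle is ruling out this residual contribution: because MCG acts essentially trivially on puncture and boundary cycles, it cannot directly kill this part. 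I would address it by passing to still larger covers on which puncture/boundary cycles can be controlled separately, or by invoking Lemma \ref{dehn_conjugate} applied to non-separating simple closed curves on $\Sigma''$ --- whose homology classes span $\overline{H_1(\Sigma'')}$ --- to show directly that $A''$ vanishes on these classes as well. Once $B = 0$ we have $A'' = 0$, so $\rho|_{\pi_1(\Sigma'')}$ is trivial and $\rho$ has finite image.
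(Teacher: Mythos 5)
Your approach is genuinely different from the paper's: you replace the paper's single well-chosen Dehn twist by an appeal to Borel density / symplectic rigidity on a high-genus cover. The idea is interesting, but as written there are two gaps, one fixable and one not.

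\emph{First}, you assert that an intertwiner $g$ for $T^*\rho\cong\rho$ ``reduces to a pair $(g_{T,1},g_{T,2})\in GL(V_1)\times GL(V_2)$.'' This is not automatic: $g$ normalizes $\operatorname{im}(\rho)\subset I+\Hom(V_2,V_1)$, but the normalizer of such a subgroup in $GL(V)$ need not preserve the flag $0\subset V_1\subset V$ (take $V=\mathbb{C}^4$, $V_1=\langle e_1,e_2\rangle$, $\operatorname{im}(\rho)=\{I+tE_{13}\}$: the normalizer only has to fix $\mathbb{C}e_1$ and $\langle e_1,e_2,e_4\rangle$, and can swap $e_2\leftrightarrow e_4$). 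This can be repaired — conjugation by $g$ does give a linear automorphism $\phi$ of the subspace $\operatorname{im}(A'')\subset W$, and the relation $A''\circ T_*=\phi\circ A''$ already suffices to get $T^*B=B$ — but the argument should be rewritten in that form, since the pair $(g_1,g_2)$ need not exist.

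\emph{Second}, and more seriously, the residual puncture/boundary contribution is a genuine unresolved gap. After Borel density you only obtain $B\cap\overline{H^1(\Sigma'')}=0$, which unwinds to the statement that $A''(\text{boundary cycles})$ spans all of $\operatorname{im}(A'')$; it does \textbf{not} give $A''=0$ when $\Sigma''$ has two or more punctures/boundary components. Your proposed fix via ``non-separating simple closed curves whose classes span $\overline{H_1}$'' does not close this, since boundary/puncture classes are precisely the ones not spanned by non-separating curves; and in any case applying Lemma \ref{dehn_conjugate} to a single curve $\gamma$ with $\sigma(\gamma)\ne 0$ gives nothing, because $T_\gamma$ fixes the class of $\gamma$. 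The paper's mechanism is different and handles punctures uniformly: it uses universal MCG-finiteness to pass to a cover $\Sigma'\to\Sigma$ so that $\sigma_{\Sigma'}=\sigma_\Sigma\circ f_*$, then picks $\gamma_2\in\ker f_*$ (so $\sigma_{\Sigma'}(\gamma_2)=0$ \emph{by construction}, with no need to localize where $\sigma$ is supported) and a simple closed curve $\gamma_1$ with $\sigma_{\Sigma'}(\gamma_1)\neq 0$ and $\langle\gamma_1,\gamma_2\rangle\neq 0$; then $T_{\gamma_1}^m$ sends $\gamma_2$ to a class on which $\sigma_{\Sigma'}$ is forced simultaneously to be zero (by the intertwining relation applied to $\rho(\gamma_2)=I$) and nonzero (since $\sigma_{\Sigma'}(\gamma_2\gamma_1^{im})=im\,\sigma_{\Sigma'}(\gamma_1)$). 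That single contradiction shows $\sigma_\Sigma=0$ on all of $H_1$, boundary classes included. If you want to salvage your rigidity argument, you would need an additional mechanism to kill $A''$ on boundary/puncture classes; as it stands this is where the proof breaks.
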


\begin{proof}
Let $V_1$ be the vector space underlying $\rho_1$ and $V_2=V/V_1$ the vector space underlying $\rho_2$. By passing to a finite cover of $\Sigma$, we may assume $\rho_1$ and $\rho_2$ are in fact trivial.

The representation \( \rho : \pi_1(\Sigma, p) \rightarrow GL(V)\)
factors through the group 
\[ \Aut_{V_1, V_2} (V) \]
of automorphisms \(g\) of \(V\) fixing \(V_1\)
and acting trivially on both  \(V_1\)  and \(V_2\).

Given $f\in \text{Hom}(V_2, V_1)$, let $\bar f: V\to V$ be the composition $$V\twoheadrightarrow V_2\overset{f}{\to} V_1\hookrightarrow V.$$
It is a standard fact from linear algebra that the map $f\mapsto \bar f+\text{Id}$ is an isomorphism
\[ \Hom(V_2, V_1) \cong \Aut_{\mathrm{V_1, V_2}} (V).\]

Hence the representation \(\rho\) has abelian image, so it factors through a homomorphism
\[\sigma_{\Sigma}: H_1(\Sigma) \rightarrow \Hom(V_2, V_1).\]
The same remains true if we pull back to any finite cover \(\Sigma' \rightarrow \Sigma\),
and the resulting diagram
\[
\xymatrix{
\sigma_{\Sigma'}: H_1(\Sigma') \ar[r] \ar[d] & \Hom(V_2, V_1) \ar[d]^{=} \\
\sigma_{\Sigma}: H_1(\Sigma) \ar[r] & \Hom(V_2, V_1) \\
}
\]
commutes.

It suffices to show that $\sigma_{\Sigma}$ is identically zero. Assume for a contradiction that \(\sigma_{\Sigma} \neq 0\).  Then it is possible
to produce a cover \(\Sigma'\) and two classes \(\gamma_1, \gamma_2 \in H_1(\Sigma')\)
such that
\begin{itemize}
\item $\sigma_{\Sigma'}(\gamma_1) \neq 0$
\item $\sigma_{\Sigma'}(\gamma_2) = 0$
\item $\gamma_1$ and $\gamma_2$ have intersection number \(\langle \gamma_1, \gamma_2\rangle = i \neq 0\). 
\item $\gamma_1$ is represented by a simple closed loop.
\end{itemize}

To see this, we first choose some nontrivial cover $\Sigma'$ of $\Sigma$ so that $\text{genus}(\Sigma')>\text{genus}(\Sigma)$ (using that $\chi(\Sigma)<0$),
and let $\gamma_2$ be a class in $H_1(\Sigma')$, 
nontrivial modulo boundary, 
and in the kernel of the $H_1(\Sigma') \rightarrow H_1(\Sigma)$.

Then, we choose a simple closed loop $\gamma_1$ in $\Sigma'$ whose class in $H_1(\Sigma')$ satisfies: 
$\sigma_{\Sigma'}(\gamma_1) \neq 0$ and $\langle \gamma_1, \gamma_2 \rangle \neq 0$. This is possible as the set of $\gamma$ failing one of these conditions is a union of two proper linear subspaces of $H_1(\Sigma')$, and thus does not contain every primitive element in $H_1(\Sigma')$; but any primitive element is represented by a simple closed loop by e.g.\ \cite[Proposition 6.2]{FM}.

Now apply Lemma \ref{dehn_conjugate} to the Dehn twist \(T_{\gamma_1}\).
We find some \(m\) such that
\[(T_{\gamma_1}^m)^* \rho \cong \rho\] and hence $$(T_{\gamma_1}^m)^*\sigma_{\Sigma'}=\sigma_{\Sigma'}.$$
On the other hand, we have
\[ \sigma_{\Sigma'}(\gamma_2) = 0 \]
and
\[ (T_{\gamma_1}^m)^* \sigma_{\Sigma'} (\gamma_2) = \sigma_{\Sigma'}(\gamma_2 \gamma_1^{im}) = \sigma_{\Sigma'}(\gamma_1^{im}) \neq 0,  \]
so the two representations $(T_{\gamma_1}^m)^*\rho$ and $\rho$ cannot be isomorphic. This is our desired contradiction.
\end{proof}
\begin{remark}
We do not know if an analogue of Lemma \ref{exact_seq} holds in the relative setting.
\end{remark}


\section{Proof of Theorems \ref{main-thm} and \ref{strong-theorem}}
Before proceeding to the proof of Theorem \ref{main-thm}, we record a few useful lemmas.

\subsection{Some useful lemmas} First, we prove some variants 
on a lemma that appears in \cite{Tits} (Proposition 2.5).

Recall that a matrix is called \emph{quasi-unipotent} if some power of it is unipotent.
\begin{lem}
\label{quasi-unipotent1}
Let $\rho: G \rightarrow \GL(V)$
be a representation of a finitely-generated group on a complex vector space.
Suppose that for every $g \in G$, the transformation $\rho(s)$ is quasi-unipotent.
Then the semisimplification $\rho^{ss}$ has finite image.
\end{lem}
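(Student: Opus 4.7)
The plan is to reduce to the semisimple case and then show that the Zariski closure $H := \overline{\rho(G)}^{\mathrm{Zar}} \subseteq \GL(V)$ is finite. Because quasi-unipotence depends only on the characteristic polynomial, replacing $\rho$ by $\rho^{\mathrm{ss}}$ preserves the hypothesis; we may therefore assume $\rho$ is semisimple, so that $H$ is a reductive linear algebraic group.

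The first main step will be to show that $\Tr(\rho(g)^m)$ takes only finitely many values as $g$ varies in $G$ and $m$ over the positive integers. Since $G$ is finitely generated, the matrix entries of $\rho(G)$ lie in a finitely generated subfield $K \subseteq \mathbb{C}$, and $F := K \cap \overline{\mathbb{Q}}$ is then a number field (its degree over $\mathbb{Q}$ is bounded by the degree of $K$ over a purely transcendental subextension of $K/\mathbb{Q}$). For each such $g$ and $m$, the element $\Tr(\rho(g)^m)$ is a sum of $\dim V$ roots of unity since $\rho(g)^m$ is quasi-unipotent, hence an algebraic integer in $F$ all of whose Galois conjugates have absolute value at most $\dim V$. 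A standard consequence of Northcott's theorem --- algebraic integers in a fixed number field with uniformly bounded Galois conjugates form a finite set --- then yields the desired finiteness. Applying Newton's identities to the traces of $\rho(g)^m$ for $m = 1, \ldots, \dim V$, the characteristic polynomial of $\rho(g)$ can take only finitely many values.

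To conclude I will use Zariski density together with basic structure theory of linear algebraic groups. The characteristic polynomial morphism $\chi \colon H \to \mathbb{A}^{\dim V}$ is regular and takes only finitely many values on the Zariski dense subset $\rho(G) \subseteq H$, so it takes only those finitely many values on all of $H$; on the connected component $H^0$ it is therefore constant and, evaluating at the identity, equal to $(t-1)^{\dim V}$. Every element of $H^0$ is then unipotent, and by Kolchin's theorem $H^0$ is a unipotent algebraic group. But $H^0$ is also reductive as the identity component of the reductive group $H$, and a group which is simultaneously reductive and unipotent is trivial; hence $H^0 = 1$, $H$ is finite, and $\rho(G) \subseteq H$ is finite, as required.

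The hard part should be the Northcott step: one must carefully check that the algebraic numbers inside $K$ really do form a number field, and then run the finiteness argument for algebraic integers with bounded Galois conjugates. After that, the algebraic-group portion of the argument proceeds cleanly.
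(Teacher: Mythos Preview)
Your proof is correct, and the trace-finiteness step (sums of roots of unity lying in the number field $F=K\cap\overline{\mathbb Q}$, bounded Galois conjugates, Northcott) is essentially the content of \cite[Lemma~2.4]{Tits} that the paper invokes. Where you diverge from the paper is in how you pass from ``finitely many trace values'' to ``finite image.'' The paper reduces to the \emph{simple} case, applies Burnside's theorem to pick $g_1,\dots,g_{n^2}\in G$ with $\{\rho(g_i)\}$ a basis of $\End(V)$, and then writes $\rho(g)=\sum_i \Tr(\rho(g_i)^{-1}\rho(g))\,e_i$ against the trace-dual basis, so that finiteness of traces immediately gives finiteness of $\rho(G)$. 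You instead stay with the semisimple case, pass to the reductive Zariski closure $H$, use Newton's identities to get finitely many characteristic polynomials, and then argue that $H^0$ consists of unipotents and hence is trivial. The paper's route is shorter and avoids algebraic-group machinery entirely; your route is more structural and makes transparent why reductivity is the key point. Both are standard and either would be acceptable here.
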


\begin{proof}
First, suppose $\rho$ is simple, so $\rho(G)$ spans the algebra $\End(V)$ as a $\mathbb{C}$-vector space, by Burnside's Theorem \cite[Theorem 27.4]{CR}. 
Let $g_1, \ldots, g_{n^2}$ be elements in $G$ such that $\{\rho(g_i)\}$ forming a basis for $\End(V)$,
and let $e_1, \ldots, e_{n^2}$ be the dual basis under the trace pairing.
For $g \in G$, we have
\[ \rho(g) = \sum_i \Tr(\rho(g_i)^{-1} \rho(g)) e_i. \]

By the proof of \cite[Lemma 2.4]{Tits}, 
$\Tr(g)$ takes on only finitely many values 
as $g$ ranges over all elements of $G$. 
%
We conclude that $\rho(G)$ is finite.

The general result follows immediately from the simple case.
\end{proof}

%

\begin{lem}
\label{quasi-unipotent_subgroup}
Let $G$ be a finitely-generated group, and $H \subseteq G$ a subgroup of finite index. 
Suppose $\rho: G \rightarrow \GL(V)$ is a representation
such that $(\rho |_H)^{ss}$ has finite image.
Then $\rho^{ss}$ also has finite image.
\end{lem}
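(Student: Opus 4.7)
The plan is to reduce this to a direct application of Lemma \ref{quasi-unipotent1} by showing that every element $\rho(g)$ for $g \in G$ is quasi-unipotent. First I would replace $H$ by the intersection of its finitely many conjugates in $G$, so that we may assume $H$ is normal of finite index; the hypothesis that $(\rho|_H)^{ss}$ has finite image is preserved (passing to a further finite-index subgroup only shrinks the image).

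Next I would translate the hypothesis into an eigenvalue condition. Choose a composition series of $\rho$ as a $G$-representation; this is a fortiori a filtration by $H$-subrepresentations, so the diagonal blocks of $\rho(h)$ in a compatible basis are exactly the action of $h$ on the Jordan--H\"older quotients, which make up $(\rho|_H)^{ss}$. Hence for every $h \in H$, the eigenvalues of $\rho(h)$ are eigenvalues of elements of a finite matrix group and are therefore roots of unity of bounded order $N$; this forces $\rho(h)^N$ to be unipotent, so $\rho(h)$ is quasi-unipotent. For an arbitrary $g \in G$, the normality of $H$ gives $g^{[G:H]} \in H$, so $\rho(g)^{[G:H]}$ is quasi-unipotent, and hence so is $\rho(g)$.

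Finally, since $G$ is finitely generated and every $\rho(g)$ is quasi-unipotent, Lemma \ref{quasi-unipotent1} directly yields that $\rho^{ss}$ has finite image, completing the proof.

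There is no real obstacle here; the only mildly subtle point is the observation that a Jordan--H\"older filtration of $\rho$ as a $G$-representation refines (or at least has the same Jordan--H\"older factors, restricted to $H$, as) any $H$-composition series in the sense that controls eigenvalues, so that the finite-image hypothesis on $(\rho|_H)^{ss}$ transfers to an eigenvalue bound on $\rho(h)$ itself rather than just on its semisimplification.
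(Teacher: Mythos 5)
Your proof is correct and takes essentially the same route as the paper: show that $\rho(g)$ is quasi-unipotent for every $g\in G$, then apply Lemma \ref{quasi-unipotent1}. Two small remarks. First, passing to the normal core of $H$ is unnecessary: for \emph{any} finite-index subgroup $H\subseteq G$, every $g\in G$ already has a positive power in $H$ (the cosets $H, gH, g^2H,\dots$ in the finite set $G/H$ must repeat), which is all that is needed and is exactly what the paper uses. Second, in your argument that $\rho(h)$ is quasi-unipotent for $h\in H$, the graded pieces of a $G$-composition series restricted to $H$ give $(\rho^{\mathrm{ss}})|_H$, not $(\rho|_H)^{\mathrm{ss}}$; these are generally distinct as $H$-representations, but you do acknowledge the point, and the relevant fact is simply that $\rho(h)$, $(\rho^{\mathrm{ss}})|_H(h)$, and $(\rho|_H)^{\mathrm{ss}}(h)$ all have the same characteristic polynomial, so the eigenvalue bound coming from the finite image of $(\rho|_H)^{\mathrm{ss}}$ transfers to $\rho(h)$. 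The paper's proof simply asserts this quasi-unipotence for $h\in H$ without spelling out the eigenvalue argument, which you have usefully filled in.
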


\begin{proof}

We know that $\rho(g)$ is quasi-unipotent for $g \in H$.
Now let $g \in G$ be arbitrary; there exists some $n > 0$ such that $g^n \in H$.  Since $\rho(g)^n$ is quasi-unipotent, $\rho(g)$ itself is as well.
We conclude by Lemma \ref{quasi-unipotent1} that $\rho^{ss}$ has finite image.
\end{proof}

\begin{lem}
\label{gln_finite_order}
Let $(\Sigma, p)$ be a pointed orientable surface (possibly with punctures or boundary),
and let $\rho: \pi_1(\Sigma, p) \rightarrow \GL(V)$ be a simple representation of $\pi_1(\Sigma, p)$
on a complex vector space.  (Here simple means that
$V$ has no proper nontrivial $\rho$-stable subspace $W$.)

Assume that $\Sigma$ has genus at least 1.  
Suppose that, for every $\gamma \in \pi_1(\Sigma, p)$ nontrivial modulo boundary in $H_1(\Sigma)$ (Def.\ \ref{mod_bdry}),
$\rho(\gamma)$ is quasi-unipotent.
Then $\rho$ has finite image.
\end{lem}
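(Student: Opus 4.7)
The plan is to show the character $\chi = \Tr \circ \rho : \pi_1(\Sigma, p) \to \mathbb{C}$ takes only finitely many values, and then conclude via Burnside's theorem, in the style of Lemma \ref{quasi-unipotent1}.

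First, I would observe that for $\gamma \in \pi_1(\Sigma, p)$ nontrivial modulo boundary, $\rho(\gamma)$ is quasi-unipotent by hypothesis, so $\chi(\gamma)$ is a sum of $m = \dim V$ roots of unity. Arguing as in the proof of Lemma \ref{quasi-unipotent1} via \cite[Lemma 2.4]{Tits}, such traces lie in a fixed finite set $F \subset \mathbb{C}$: they are algebraic integers of bounded absolute value in the finitely-generated trace field of $\rho$, and their Galois conjugates are also sums of $m$ roots of unity, hence likewise bounded by $m$.

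The key new step is extending this finiteness to every element of $\pi_1(\Sigma, p)$. This is where the hypothesis that the genus is at least $1$ enters: $\overline{H_1(\Sigma)}$ is then a nonzero torsion-free abelian group, so I may fix $\gamma_0 \in \pi_1(\Sigma, p)$ whose class in $\overline{H_1(\Sigma)}$ is nonzero. By hypothesis $\rho(\gamma_0)$ is quasi-unipotent, so $\rho(\gamma_0)^N = I + M$ for some $N \geq 1$ and some nilpotent $M \in \End(V)$. For an arbitrary $g \in \pi_1(\Sigma, p)$,
\[ P(k) := \chi(g \gamma_0^{kN}) = \Tr\bigl(\rho(g)(I+M)^k\bigr) = \sum_{j=0}^{m-1} \binom{k}{j} \Tr(\rho(g) M^j) \]
is a polynomial in $k$ of degree at most $m-1$. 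Since $\overline{H_1(\Sigma)}$ is torsion-free, $g\gamma_0^{kN}$ is nontrivial modulo boundary for all but at most one $k \geq 1$, so $|P(k)| \leq m$ for infinitely many integers $k$. A polynomial bounded on infinitely many integers must be constant, so $\chi(g) = P(0) = P(k) \in F$ for every $g$.

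Finally, by Burnside's theorem applied to the simple representation $\rho$, there exist $g_1, \ldots, g_{m^2} \in \pi_1(\Sigma, p)$ with $\{\rho(g_i)\}$ a $\mathbb{C}$-basis of $\End(V)$. Expressing $\rho(g)$ in this basis through the trace pairing shows that each matrix entry of $\rho(g)$ is a fixed $\mathbb{C}$-linear combination of traces $\chi(g \cdot h)$ for $h$ in a fixed finite subset of $\pi_1(\Sigma, p)$; since each such trace lies in $F$, there are only finitely many possibilities for $\rho(g)$, and $\rho$ has finite image. I expect the main subtlety to be Step 1 — carefully extracting a genuinely finite set $F$ from the Tits-style argument, by controlling both absolute values and Galois conjugates inside the trace field; the polynomial trick of Step 2 and the Burnside reconstruction of Step 3 are then essentially routine.
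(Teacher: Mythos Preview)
Your argument is correct, and it reaches the same conclusion as the paper, but the middle step is genuinely different. Both proofs begin by invoking the Tits-style trace bound to get a finite set $F$ containing $\chi(\gamma)$ for every $\gamma$ nontrivial modulo boundary, and both finish with the Burnside reconstruction $\rho(g)=\sum_i \chi(g_i^{-1}g)\,e_i$. The divergence is in how one handles elements that are trivial modulo boundary. The paper never tries to show $\chi(g)\in F$ for all $g$; instead it fixes the Burnside basis $g_1,\dots,g_{n^2}$ first, defines $\Gamma=\{\gamma: g_i^{-1}\gamma \text{ nontrivial mod boundary for all }i\}$, observes that $\rho(\Gamma)$ lies in a finite set $G_0$, and then uses a short combinatorial argument in $\overline{H_1(\Sigma)}$ to write every element as a product of two elements of $\Gamma$, giving $\rho(G)\subseteq G_0G_0$. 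Your route instead exploits quasi-unipotence of a single fixed $\gamma_0$ to see that $k\mapsto \chi(g\gamma_0^{kN})$ is a polynomial taking values in $F$ for cofinitely many integers $k$, hence constant, which forces $\chi(g)\in F$ for every $g$ outright. Your approach yields the slightly stronger intermediate statement that the full character has finite image, at the cost of using the unipotent structure of $\rho(\gamma_0)^N$; the paper's approach is more purely combinatorial and would work verbatim under the weaker hypothesis that traces of elements nontrivial modulo boundary lie in a finite set.
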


\begin{proof}
Let $G=\pi_1(\Sigma, p)$.

Since $\rho$ is simple, $\rho(G)$ spans the algebra $\End(V)$ as a $\mathbb{C}$-vector space. 
Let $g_1, \ldots, g_{n^2}$ be elements in $G$ such that $\{\rho(g_i)\}$ forming a basis for $\End(V)$,
and let $e_1, \ldots, e_{n^2}$ be the dual basis under the trace pairing.
For $g \in G$, we have
\[ \rho(g) = \sum_i \Tr(\rho(g_i)^{-1} \rho(g)) e_i. \]

Let $\Gamma\subset G$ be the subset consisting of those elements $\gamma\in G$ such that $g_i^{-1}\gamma$ is non-trivial mod boundary for all $i$.

By the proof of \cite[Lemma 2.4]{Tits}, 
$\Tr(\gamma)$ takes on only finitely many values 
as $\gamma$ ranges over all elements of $\pi_1(\Sigma, p)$
nontrivial modulo boundary in $H_1$. Thus there is a finite subset $G_0 \subseteq \End V$
such that, if $\gamma$ is contained in $\Gamma$,
then in fact $\rho(\gamma) \in G_0$.

Now we claim that any $\gamma$ can be written as a product $\gamma=\gamma_1\gamma_2$, with in $\gamma_1, \gamma_2\in \Gamma$.  Let $\bar g_j$ be the image of $g_j$ in $H_1(C)$. It is enough to show that any $h\in H_1(\Sigma)$ can be written as $h=h_1+h_2$, where $h_j-\bar{g_i}$ has non-zero image in $\overline{H_1(\Sigma)}$ for all $i,j$.  But now we may choose $h_1$ to be any element of $H_1(\Sigma)$ whose image in $\overline{H_1(\Sigma)}$ is not equal to that of $\bar g_i$ or $h-\bar g_i$ for any $i$, and set $h_2=h-h_1$.

We conclude that $\rho(G) \subseteq G_0 G_0$, so $\rho(G)$ is finite.
\end{proof}

\begin{cor}
\label{gln_quasi_finite}
Let $(\Sigma, p)$ be a pointed orientable surface (possibly with punctures or boundary),
and let $\rho: \pi_1(\Sigma, p) \rightarrow \GL(V)$ be any representation of $\pi_1(\Sigma, p)$
on a complex vector space.  

Assume that $\Sigma$ has genus at least 1.  
Suppose that, for every $\gamma \in \pi_1(\Sigma, p)$ nontrivial modulo boundary in $H_1(\Sigma)$,
$\rho(\gamma)$ is quasi-unipotent.
Then the semi-simplification $\rho^{ss}$ of $\rho$ has finite image.
\end{cor}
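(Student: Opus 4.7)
The corollary should follow from Lemma \ref{gln_finite_order} by a Jordan--H\"older style reduction to the simple case. My plan is as follows.

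First, I would replace $\rho$ by its semisimplification $\rho^{ss}$, which decomposes as a finite direct sum $\rho^{ss} \cong \bigoplus_{i=1}^{k} \rho_i$ of simple representations $\rho_i : \pi_1(\Sigma, p) \to GL(V_i)$. Since a representation has finite image if and only if each of its direct summands does, it suffices to show that every $\rho_i$ has finite image.

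Next, I would verify that each $\rho_i$ satisfies the hypothesis of Lemma \ref{gln_finite_order}. Fix $\gamma \in \pi_1(\Sigma, p)$ nontrivial modulo boundary. By hypothesis, $\rho(\gamma)^N = 1 + n$ with $n$ nilpotent for some $N > 0$, so all eigenvalues of $\rho(\gamma)^N$ equal $1$. Passing to a composition series of $\rho$, every simple subquotient of $\rho$ inherits this eigenvalue list (with multiplicity), so $\rho_i(\gamma)^N$ likewise has all eigenvalues equal to $1$, which means $\rho_i(\gamma)$ is quasi-unipotent. Thus the hypothesis of Lemma \ref{gln_finite_order} applies to the simple representation $\rho_i$.

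Finally, applying Lemma \ref{gln_finite_order} to each $\rho_i$ (using the assumption that $\Sigma$ has genus at least $1$) yields that $\rho_i$ has finite image for every $i$. Therefore $\rho^{ss} = \bigoplus_i \rho_i$ has finite image, as desired. The only real content here is the passage of quasi-unipotence to simple subquotients, which is immediate from the fact that the characteristic polynomial of $\rho(\gamma)$ factors as the product of the characteristic polynomials of the simple factors of $\rho^{ss}(\gamma)$; I would not expect any serious obstacle.
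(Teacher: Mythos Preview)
Your proposal is correct and is precisely the argument the paper has in mind: the paper's proof reads in its entirety ``Immediate from Lemma \ref{gln_finite_order},'' and what you have written is exactly the unpacking of that word ``immediate'' --- decompose $\rho^{ss}$ into simple summands, note that quasi-unipotence passes to subquotients via eigenvalues, and apply Lemma \ref{gln_finite_order} to each summand.
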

\begin{proof}
Immediate from Lemma \ref{gln_finite_order}.
\end{proof}


%

Not every class $\gamma \in \pi_1(\Sigma, p)$ can be represented by a simple curve
(i.e.\ a curve with no self-intersection).
But any $\gamma$ becomes a simple curve after pullback to a cover.

The result we need is a simple application of a theorem of Scott \cite[Theorem 3.3]{Scott}. Scott's theorem allows one to find covers such that a given curve lifts to a simple closed curve; we observe that one may do so while keeping it disjoint from a collection of other curves.  (A detailed exposition of Scott's proof may be found in \cite{Patel}.)

\begin{lem}
\label{make_curve_simple}
Let $(\Sigma, p)$ be a pointed orientable surface with $\chi(\Sigma) < 0$, and $\gamma_1, \ldots, \gamma_r$ simple closed curves on $\Sigma$, not passing through $p$ and not jointly separating.
Suppose $\gamma \in \pi_1(\Sigma, p)$ is represented by a closed curve that is disjoint from  $\gamma_1, \ldots, \gamma_r$.

Then there exists a finite cover $f \colon (\Sigma', p') \rightarrow (\Sigma, p)$ such that the subgroup $\pi_1(\Sigma',  p') \subseteq \pi_1 (\Sigma, p)$ contains $\gamma$,
and the class $\gamma \in \pi_1(\Sigma',  p')$ is represented by a \emph{simple} closed curve, disjoint from the curves $f^{-1} (\gamma_i)$.
\end{lem}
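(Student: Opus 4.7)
The plan is to reduce the problem to a direct application of Scott's theorem on the cut surface $\Sigma_0 = \Ccut(\gamma_1, \ldots, \gamma_r)$, and then lift the resulting finite cover of $\Sigma_0$ to a finite cover of $\Sigma$ using the LERF property of $\pi_1(\Sigma)$. By hypothesis $\Sigma_0$ is connected, its Euler characteristic equals $\chi(\Sigma) < 0$, and it contains $p$. Since $\gamma$ is representable by a loop disjoint from the $\gamma_i$, we may regard $\gamma$ as an element of $\pi_1(\Sigma_0, p) \subseteq \pi_1(\Sigma, p)$. I will then apply the standard consequence of Scott's theorem---every closed loop on a surface of negative Euler characteristic becomes simple in some finite cover---to $(\Sigma_0, \gamma)$, obtaining a finite-index subgroup $H \subseteq \pi_1(\Sigma_0, p)$ containing $\gamma$ such that in the corresponding cover $\Sigma'_0 \to \Sigma_0$, the class $\gamma$ is represented by a simple closed curve.

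The key step is to produce a finite-index $K \subseteq \pi_1(\Sigma, p)$ with $K \cap \pi_1(\Sigma_0, p) = H$; the cover $\Sigma' \to \Sigma$ corresponding to such a $K$ will then contain, as the component of $f^{-1}(\Sigma_0)$ through some lift $p'$ of $p$, an isomorphic copy of $\Sigma'_0$. To construct $K$, pick coset representatives $g_1 = e, g_2, \ldots, g_k$ of $H$ in $\pi_1(\Sigma_0, p)$. Since $\pi_1(\Sigma)$ is LERF by \cite[Theorem 3.3]{Scott} and $H$ is finitely generated, for each $i \geq 2$ one may select a finite-index subgroup $K_i \subseteq \pi_1(\Sigma, p)$ with $H \subseteq K_i$ and $g_i \notin K_i$. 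Setting $K = \bigcap_{i=2}^{k} K_i$ yields a finite-index subgroup containing $H$ and avoiding each $g_i$ with $i \geq 2$; a direct coset computation then gives $K \cap \pi_1(\Sigma_0, p) = H$, as required.

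For the conclusion: $\gamma \in H \subseteq K$, so $\gamma$ lifts to a loop in $\Sigma'$ based at $p'$; the component of $f^{-1}(\Sigma_0)$ containing $p'$ has fundamental group $H$, hence is isomorphic as a cover of $\Sigma_0$ to $\Sigma'_0$, and the lift of $\gamma$ (which lies in this component) inherits its simplicity from $\Sigma'_0$. Since $\Sigma_0$ is disjoint from $\bigcup_i \gamma_i$ by construction, the preimage $f^{-1}(\Sigma_0)$---and in particular the simple closed curve representing $\gamma$---is disjoint from each $f^{-1}(\gamma_i)$. I expect the main technical step to be the LERF-based extension of the subsurface cover in the middle paragraph; a direct application of Scott's theorem to $(\Sigma, \gamma)$ alone would produce a simple lift of $\gamma$ but would not control disjointness from the $\gamma_i$, so the detour through the intermediate surface $\Sigma_0$ is essential.
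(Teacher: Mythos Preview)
Your argument is correct, and it takes a genuinely different route from the paper's.

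The paper applies Scott's theorem to $\Sigma$ itself, obtaining a finite cover $\Sigma'\to\Sigma$ in which $\gamma$ is represented by a simple closed curve; it then has to argue separately that this simple representative can be isotoped off the preimages $f^{-1}(\gamma_i)$. For that step the paper uses hyperbolic geometry: equip $\Sigma'$ with a hyperbolic metric in which the $\gamma_i$ are geodesic, pass to the universal cover $\tilde\Sigma\subset\mathbf{H}$, and show that the axis of the deck transformation corresponding to $\gamma$ stays inside the convex region of $\tilde\Sigma$ cut out by the lifts of the $\gamma_i$. The geodesic representative of $\gamma$ in $\Sigma'$ then automatically avoids $f^{-1}(\gamma_i)$.

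Your approach instead applies Scott's theorem to the cut surface $\Sigma_0=\Ccut(\gamma_1,\ldots,\gamma_r)$, so that disjointness from the $\gamma_i$ is built in from the start; the extra work is then to extend the finite cover $\Sigma_0'\to\Sigma_0$ to a finite cover of $\Sigma$ inducing it on the component through $p'$, and you do this by a second, direct use of LERF (separating $H$ from finitely many coset representatives inside $\pi_1(\Sigma)$). This trades the hyperbolic convexity argument for a purely group-theoretic subgroup-separability argument. Both are short; yours is perhaps more in the spirit of how LERF is typically used to promote covers of $\pi_1$-injective subsurfaces, while the paper's argument is more hands-on and avoids invoking separability a second time. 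Either way, the one point worth making explicit is that $\pi_1(\Sigma_0,p)\to\pi_1(\Sigma,p)$ is injective (so that the element ``$\gamma\in\pi_1(\Sigma_0,p)$'' really is the given $\gamma$), which follows since the $\gamma_i$ are essential and $\Sigma_0$ is not a disk.
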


\begin{proof}
Scott's theorem (\cite[Theorem 3.3]{Scott}) shows that
there is some $f \colon  (\Sigma', p') \rightarrow (\Sigma, p)$ such that
$\gamma \in \pi_1(\Sigma', p')$ is represented by a simple closed curve in $\Sigma'$.
We need to show that $\gamma$ may be taken to avoid the curves $f^{-1} (\gamma_i)$.

We can put a hyperbolic metric on $\Sigma'$ in such a way that each $\gamma_i$ is a geodesic.
The universal cover $\tilde{\Sigma}$ of $\Sigma'$
is a convex region in the hyperbolic plane $\mathbf{H}$,
bounded by lines.
(If $\Sigma$ -- hence $\Sigma'$ -- has no boundary components,
then the universal cover is all of $\mathbf{H}$.  See \cite[end of \S 5]{Patel}.)

The curves $\gamma_i$ lift to lines in $\tilde{\Sigma} \subseteq \mathbf{H}$;
let $L$ be the union of these lines,
and let $X$ be the connected component of $p$ in $\tilde{\Sigma} - L$.
Again, $X$ is a convex region in $\mathbf{H}$, bounded by a union of lines.

The fundamental group of $\Sigma'$ acts on
the universal cover $\tilde{\Sigma}$ by deck transformations,
which extend to translations of $\mathbf{H}$.
Let $g$ be the translation of $\mathbf{H}$ corresponding to $\gamma$.
Since $\gamma$ is represented by a curve in $\Sigma$
that avoids the curves $\gamma_i$, we find that
$g^n p' \in X$ for all $n \in \mathbf{Z}$.
The translation $g$ acts on $\partial \mathbf{H}$
with two fixed points $g^{\pm \infty} p'$.
By a limiting argument, both these fixed points
lie in $\partial X$, so the geodesic $\ell$ between them lies in $X$.

Now $\ell$ descends to a curve isotopic to $\gamma$ in $\Sigma'$; this is again a simple closed curve by \cite[Proposition 1.6]{FM}, 
and it avoids the curves $f^{-1} (\gamma_i)$ by construction. 
\end{proof}

Finally, we observe:
\begin{lem}
\label{nonsimple}
Let $V$ be a complex vector space, and
let $g \in \GL(V)$ be an element not equal to a scalar matrix.
Then there exists a nontrivial proper subspace $W \subset V$
such that, if $h \in \GL(V)$ commutes with $g$,
then $h$ stabilizes $W$.
\end{lem}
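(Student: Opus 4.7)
The plan is to exploit the multiplicative Jordan decomposition $g = g_s g_u$, where $g_s$ is semisimple, $g_u$ is unipotent, they commute, and both are polynomials in $g$. Since they are polynomials in $g$, any $h \in \GL(V)$ commuting with $g$ automatically commutes with both $g_s$ and $g_u$; it therefore suffices to find a proper nontrivial subspace $W$ preserved by the entire centralizer of either $g_s$ or $g_u$, depending on which is nonscalar.

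First I would split into two cases. In the case that $g_s$ is not a scalar matrix, $g_s$ has at least two distinct eigenvalues; I take $W$ to be any single eigenspace of $g_s$, which is automatically a nontrivial proper subspace of $V$. Any $h$ commuting with $g_s$ permutes the generalized eigenspaces (and in fact, since $g_s$ is diagonalizable, the eigenspaces themselves) of $g_s$, and indeed preserves each one pointwise-as-a-subspace because $h$ commutes with $g_s$; so such $h$ stabilize $W$. In the remaining case, $g_s = \lambda \cdot \operatorname{Id}$ is scalar, and since $g$ itself is not scalar, $g_u = \lambda^{-1} g$ must be a nontrivial unipotent element. Let $N = g_u - \operatorname{Id}$; this is a nonzero nilpotent operator, so $W := \ker N$ is nonzero (since $N$ is nilpotent) and properly contained in $V$ (since $N \neq 0$). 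Any $h$ commuting with $g$ commutes with $N$, and hence preserves $\ker N = W$.

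There is essentially no obstacle here: the argument is just a case analysis on the Jordan decomposition together with the standard fact that the centralizer of a diagonalizable operator preserves its eigenspace decomposition. The only mild point to verify is that $\ker N$ is proper when $N$ is a nonzero nilpotent, which is immediate since $\ker N = V$ would force $N = 0$.
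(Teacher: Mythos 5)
Your argument is correct, but it is more elaborate than necessary, and it diverges from the paper's route. The paper's proof is a one-liner: since $V$ is a complex vector space, $g$ has an eigenvalue $\lambda$, and one takes $W = \ker(g - \lambda\,\mathrm{Id})$ directly. This is nonzero because $\lambda$ is an eigenvalue, proper because $g$ is not scalar, and preserved by anything commuting with $g$, since $g(hv)=hgv=\lambda hv$. Your Jordan-decomposition argument is sound and each case checks out, but note that your second case (with $N = g_u - \mathrm{Id}$, so $W = \ker N = \ker(g-\lambda\,\mathrm{Id})$) is literally a special instance of the paper's choice, while your first case uses generalized eigenspaces of $g$ (i.e.\ eigenspaces of $g_s$) rather than eigenspaces of $g$ --- also fine, but the ordinary eigenspace already does the job and collapses the two cases into one. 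So the Jordan decomposition buys nothing here; the moral is that over $\mathbb{C}$ the existence of an eigenvalue is all you need.
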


\begin{proof}
Take $W$ to be any nontrivial eigenspace for any eigenvalue of $g$.
\end{proof}

\subsection{Proof of the main theorem}\label{thm-pf} We may now proceed with the proof of our main result. First we prove the result for relatively universally MCG-finite representations.

\begin{proof}[Proof of Theorem \ref{strong-theorem}]
For the reader's convenience, we briefly recall the theorem we are trying to prove. Namely, let $\Sigma$ be an orientable surface, possibly with boundary/punctures, and let $\gamma_1, \ldots, \gamma_r$ be disjoint simple closed curves on $\Sigma$, not jointly separating.  Let $p\in \Ccut(\gamma_1, \cdots, \gamma_r)$ be a point. Suppose $\pi_1(\Ccut(\gamma_1, \cdots, \gamma_r), p)$ is nonabelian, and we have a representation 
\[\rho: \pi_1(\Ccut(\gamma_1, \cdots, \gamma_r), p)\to GL(V)\]
that is universally MCG-finite relative to $\Sigma$.
Then we wish to show that the semi-simplification $\rho^{\text{ss}}$ of $\rho$ has finite image.

Write $\Ccut = \Ccut(\gamma_1, \cdots, \gamma_r)$. We will proceed by induction on the rank of $\rho$.

By Lemma \ref{quasi-unipotent_subgroup}, there is no harm in passing to a finite cover of $\Sigma$. Hence we may assume that $\Ccut$ is of genus at least 2.

Let $S \subseteq \pi_1(\Ccut, p) $ be the set of elements that are nontrivial modulo boundary in $H^1(\Ccut)$.  We will show that for $\gamma\in S$, $\rho(\gamma)$ is quasi-unipotent.  

Fix $\gamma\in S$. Pulling back to a further finite cover $\Sigma'$ of $\Sigma$, we may assume by Lemma \ref{make_curve_simple}
that $\gamma$ is isotopic to a simple closed curve $\gamma_{r+1}$, such that $\gamma_1, \ldots, \gamma_{r+1}$ are not jointly separating in $\Sigma$ (as $\gamma$ is non-trivial mod boundary in $\Ccut$). 

By Lemma \ref{dehn_conjugate}, there exist a positive integer $m$ and an automorphism $g$ of $V$ such that, for every $\delta \in \pi_1(\Ccut, p)$, we have
\[ \rho (T_{\gamma_{r+1}}^m \delta) = g \rho(\delta) g^{-1}. \]

First, assume $g$ is a scalar matrix.  Then we have
\[ \rho(T_{\gamma_{r+1}}^m \delta) = \rho(\delta) \]
for all $\delta$.  Taking $\delta$ a curve in $\Ccut$ which meets $\gamma_{r+1}$ exactly once, and is transverse to $\gamma_{r+1}$ at that point
(possible because $\gamma_{r+1}$ is non-separating),
we have $$\rho(\delta)\rho(\gamma_{r+1})^m=\rho(\delta\gamma_{r+1}^m)=\rho(T^m_{\gamma_{r+1}}\delta)=\rho(\delta)$$
and we find that
\[ \rho(\gamma_{r+1})^m = 1, \]
as desired. In particular, if $\dim(V)=1$, $g$ is always a scalar matrix, and so we may conclude the base case of the induction.

Now, assume $g$ is not a scalar matrix.
Let $\Ccut'$ be the surface obtained from $\Ccut$ by cutting along $\gamma_{r+1}$, 
with base-point $p$ not on one of the boundary components coming from $\gamma_{r+1}$.
Since $\gamma_{r+1}$ is a non-separating closed curve, and $\Ccut$ has genus at least 2, $\Ccut'$ is again a surface with nonabelian $\pi_1$,
and our local system on $\Ccut$ pulls back to a local system on $\Ccut'$.

The representation
\[ \rho|_{\pi_1(\Ccut', p)}: \pi_1(\Ccut', p) \rightarrow \GL(V) \]
factors through the centralizer of $g$, as  $\pi_1(\Ccut', p)$ is generated by a loop isotopic to $\gamma_{r+1}$ and by loops in $\Sigma$ not intersecting $\gamma_{r+1}$.
By Lemma \ref{nonsimple}, there is thus a non-trivial proper subspace $W \subset V$
which is stable under this representation. 

By Lemma \ref{umf_cut}, $\rho|_{\pi_1(\Ccut', p)}$ is universally MCG-finite relative to $\Sigma'$. Hence by Lemma \ref{subquot}, $W^{\text{ss}}$ and $(V/W)^{\text{ss}}$ are universally MCG-finite relative to $\Sigma'$ and thus by the inductive hypothesis, the representations on $W^{\text{ss}}$ and $(V/W)^{\text{ss}}$ have finite image. Thus $\rho(\gamma_{r+1})$ is quasi-unipotent.

Now we conclude the result by Corollary \ref{gln_quasi_finite}.
\end{proof}
We now deduce the main result of the paper.
\begin{proof}[Proof of Theorem \ref{main-thm}]
By Theorem \ref{strong-theorem}, $\rho^{ss}$ has finite image. Now we may conclude by Lemma \ref{exact_seq}, by induction on the number of components in the composition series for $\rho$.
\end{proof}
\subsection{An example: the Parshin representation}
In the course of proving Theorem \ref{main-thm},
we also prove the following intermediate result.

Suppose $$\rho:\pi_1(\Sigma,p)\to GL(V)$$ is MCG-finite, and $\gamma$ is some simple closed non-separating curve in $\Sigma$. Let $\Ccut$ be the surface obtained by cutting $\Sigma$ along $\gamma$.
Take $m$ such that
\[(T_\gamma^m)^* \rho \cong \rho,\]
and let
\[ g \colon V \rightarrow  V \]
be an intertwining operator such that
\[ \rho(T_{\gamma}^m \delta) = g \rho(\delta) g^{-1}. \]

Then for any $\delta$ disjoint from $\gamma$, $\rho(\delta)$ must commute with $g$.
In particular, if $g$ is not a scalar matrix, then the restriction of $\rho$ to $\Ccut$ is reducible.

As an example, we'll see what this looks like for $\rho$ the Parshin representation. The idea goes back to Kodaira and Parshin \cite[Appendix]{Parshin}; 
an explicit construction with the properties we describe is given in \cite[Section 7]{LV}. 
\begin{ex}\label{parshin-example}
Let $\Sigma = C$ be a complex algebraic curve of genus at least $2$,
and let $p$ be a point of $C$.
There are finitely many isomorphism classes of degree-3 covers $Y_p^i$ of $C$, 
branched at $p$ and nowhere else,
and having Galois group $S_3$.
There is an algebraic family $\pi \colon Y \rightarrow C$ whose fiber over any point $p \in C(\mathbf{C})$
is the disjoint union of the curves $Y_p^i$. Let $Y\to C'\to C$ be the Stein factorization of $\pi$.
The cohomology of this family $R^1 \pi_* (\mathbf{C}_Y)$
gives a local system on $C$;
the corresponding representation $\rho$ is exactly the monodromy representation of $\pi_1(C)$ on the cohomology of a fiber.

First, note that $\rho$ is MCG-finite, as by construction it extends to a representation of $MCG(C\setminus\{p\})$ (see Remark \ref{birman-remark}).

Next, note that $\rho$ is virtually reducible: on a finite-index subgroup of $\pi_1(C)$ (namely, $\pi_1(C')$), it splits up as a direct sum of the $H^1(Y_p^i)$. Each $H^1(Y_p^i)$ is virtually irreducible; this follows from the big monodromy result \cite[Lemma 4.3]{LV}. In particular, $\rho$ has infinite image; it is thus not universally MCG-finite.

Now fix a simple closed curve $\gamma$ on $C$, not passing through $p$, and take $m$ sufficiently divisible.  
Then $\gamma^m$ lifts to a disjoint  union of simple closed curves on each cover $Y_p^i$.
Thus $T_{\gamma}^m$
acts unipotently, as a product of commuting Dehn twists.

The corresponding $g$ is also unipotent: it's given by the action of a Dehn twist about $\gamma$, as an element of MCG(C).

Let $V_i$ be the subspace of $H^1$ 
dual to the subspace of $H_1$
spanned by the components of the lift of $\gamma^m$ to $Y^i_p$.
Then $\sum_i V_i$ (the direct sum of the $V_i$) is a $g$-stable subspace of $\rho | _{\pi_1(\Ccut)}$.

In particular, we see that $\rho|_{\pi_1(\Ccut)}$ is reducible, as expected.
\end{ex}

\subsection{An example from topological quantum field theory}
Unlike universally MCG-finite representations, MCG-finite representations can be quite interesting. 
\begin{ex}
In \cite[Thm.\ 1.1]{KS}, Koberda and Santharoubane construct representations $\rho$ of $\pi_1(\Sigma)$ with the following properties:
\begin{itemize}
    \item $\rho$ has infinite image.
    \item $\rho(\gamma)$ has finite order, for any simple closed curve $\gamma$.
    \item $\rho$ is MCG-fixed (\cite[\S 1.2]{KS}).
\end{itemize}
\end{ex}
This example is particularly relevant to the strategy outlined in the introduction for proving the $p$-curvature conjecture; it shows that it does not suffice to study the monodromy of simple closed loops in a surface.
\subsection{An example: a unipotent, MCG-finite representation}
We conclude with an example philosophically relevant to Question \ref{ss-question}. Namely, for every surface $\Sigma$ with $\chi(\Sigma)<0$, we observe that there exist non-trivial unipotent representations of the fundamental group of $\Sigma$ which are stable under the action of the mapping class group of $\Sigma$.
\begin{ex}
Let $\Sigma$ be a surface with $\chi(\Sigma)<0$, and let $p\in \Sigma$ be a point, so that $\pi_1(\Sigma,p)$ is non-abelian. Let $\mathbb{Q}[\pi_1(\Sigma,p)]$ be the group algebra of $\Sigma$ and $\mathscr{I}\subset \mathbb{Q}[\pi_1(\Sigma,p)]$ the augmentation ideal. Then we claim that for any $n>1$, the representation of $\pi_1(\Sigma,p)$ on $V_n:=\mathbb{Q}[\pi_1(\Sigma,p)]/\mathscr{I}^n$ induced by the action of $\pi_1(\Sigma)$ on itself by conjugation is non-trivial, unipotent, and fixed by the action of the mapping class group.

Indeed, direct computation shows that these representations are non-trivial; they are unipotent as for each $i$, the action of $\pi_1(\Sigma)$ on $\mathscr{I}^i/\mathscr{I}^{i+1}$ is trivial. Finally, these representations are MCG-finite (indeed, they are fixed by $MCG(\Sigma)$) by Remark \ref{birman-remark}, as each $V_n$ is naturally a representation of $MCG(\Sigma\setminus p)$.
\end{ex}
\bibliographystyle{alpha}
\bibliography{mcg-finite}

\end{document}